\numberwithin{equation}{section}
\newtheorem{thm}{Theorem}[section]
\newtheorem{prop}[thm]{Proposition}
\newtheorem{lem}[thm]{Lemma}
\newtheorem{cor}[thm]{Corollary}
\newtheorem{Def}[thm]{Definition}
\theoremstyle{definition}
\newtheorem{rem}[thm]{Remark}
\DeclareMathOperator{\DIV}{div}
\newcommand{\T}{\mathbb{T}}
\newcommand{\R}{\mathbb{R}}
\newcommand{\N}{\mathbb{N}}
\newcommand{\p}{\partial}
\newcommand{\eps}{\varepsilon}
\newcommand{\divergence}{\text{div}}
\newcommand{\diff}{\mathop{}\!\mathrm{d}}
\def\Xint#1{\mathchoice
	{\XXint\displaystyle\textstyle{#1}}%
	{\XXint\textstyle\scriptstyle{#1}}%
	{\XXint\scriptstyle\scriptscriptstyle{#1}}%
	{\XXint\scriptscriptstyle\scriptscriptstyle{#1}}%
	\!\int}
\def\XXint#1#2#3{{\setbox0=\hbox{$#1{#2#3}{\int}$ }
		\vcenter{\hbox{$#2#3$ }}\kern-.58\wd0}}
\def\dashint{\Xint-}
\newcommand{\doublewidetilde}[1]{{%
  \mathpalette\double@widetilde{#1}%
}}
\newcommand{\double@widetilde}[2]{%
  \sbox\z@{$\m@th#1\widetilde{#2}$}%
  \ht\z@=.9\ht\z@
  \widetilde{\box\z@}%
}
\author{Dennis Gallenm\"{u}ller}
\address{Institute of Applied Analysis, Ulm University, Helmholtzstra\ss e 18, 89081 Ulm, Germany}
\email{dennis.gallenmueller@uni-ulm.de}
\thanks{}
\author{Piotr Gwiazda}
\address{Institute of Mathematics of Polish Academy of Sciences, Jana i J\k edrzeja \'Sniadeckich 8, 00-656 Warsaw, Poland}
\email{pgwiazda@mimuw.edu.pl}
\thanks{Piotr Gwiazda was supported by National Science Center, Poland through project no. 2018/31/B/ST1/02289.}
\author{Agnieszka Świerczewska-Gwiazda}
\address{Faculty of Mathematics, Informatics and Mechanics, University of Warsaw, Stefana Banacha 2, 02-097 Warsaw, Poland}
\email{aswiercz@mimuw.edu.pl}
\thanks{Agnieszka Świerczewska--Gwiazda was supported by National Science Center, Poland through project no. 2017/27/B/ST1/01569.}
\author{Jakub Woźnicki}
\address{Institute of Mathematics of Polish Academy of Sciences, Jana i J\k edrzeja \'Sniadeckich 8, 00-656 Warsaw, Poland; Faculty of Mathematics, Informatics and Mechanics, University of Warsaw, Stefana Banacha 2, 02-097 Warsaw, Poland}
\email{jw.woznicki@student.uw.edu.pl}
\thanks{Jakub Woźnicki was supported by National Science Center, Poland through project no. 2023/32/O/ST1/03031.}
\begin{document}

\title[From EKP equations to CHKS system.]{Cahn--Hillard and Keller--Segel systems as high-friction limits of Euler--Korteweg and Euler--Poisson equations}

\begin{abstract}
We consider a combined system of Euler--Korteweg and Euler--Poisson equations with friction and exponential pressure with exponent $\gamma > 1$. We show the existence of dissipative measure-valued solutions in the cases of repulsive and attractive potential in Euler--Poisson system. The latter case requires additional restriction on $\gamma$. Furthermore in case of $\gamma \geq 2$ we show that the strong solutions to the Cahn--Hillard--Keller--Segel system are a high-friction limit of the dissipative measure-valued solutions to Euler--Korteweg--Poisson equations.
\end{abstract}

\keywords{Euler--Poisson equations, Euler--Korteweg equations, Cahn--Hillard equations, Keller--Segel equations, diffusive equations, Euler flows, high-friction limits}
\subjclass[2000]{35B25, 35K55, 35Q31}

\maketitle

\section{Introduction}

\noindent The present work  focuses  on the high-friction limit of Euler--Poisson and Euler--Korteweg systems. Let us recall that the former is of the form
\begin{align}\label{int:eulerpoisson}
        \left\{\begin{array}{lll}
		\partial_t\rho+\operatorname{div}_x(\rho u)=0,\\
		&\\
		\partial_t(\rho u)+\operatorname{div}_x(\rho u\otimes u)+k\rho u=-\nabla_x\rho^{\gamma} + b\,\rho\nabla_x\Phi_{\rho},\\
		&\\
		-\Delta_x\Phi_{\rho}=\rho-M_{\rho},
		\end{array}\right.
\end{align}
and will converge to the parabolic-elliptic Keller--Segel system
\begin{align}\label{int:kellersegel}
    \left\{\begin{array}{ll}
          \p_t\rho + \DIV_x(-\nabla_x(\rho^\gamma) + b\,\rho\nabla_x\Phi_\rho) = 0\\
         -\Delta_x\Phi_{\rho}=\rho-M_{\rho},
    \end{array}\right.
\end{align}
while the latter consists of following
\begin{align}\label{int:eulerkorteweg}
        \left\{\begin{array}{lll}
		\partial_t\rho+\operatorname{div}_x(\rho u)=0,\\
		&\\
		\partial_t(\rho u)+\operatorname{div}_x(\rho u\otimes u)+k\rho u=-\nabla_x\rho^{\gamma} + \rho\nabla_x(\Delta_x\rho),\\
		\end{array}\right.
\end{align}
which will converge to the Cahn--Hillard equation
\begin{align}\label{int:cahnhillard}
    \p_t\rho - \DIV_x(\nabla_x(\rho^\gamma) - \rho\nabla_x(\Delta_x\rho)) = 0.
\end{align}
In both of the cases the variables are the density $\rho\colon [0,T]\times \T^d\rightarrow [0,\infty)$, the velocity $u\colon [0,T]\times \T^d\rightarrow \R^d$, and the non-local potential $\Phi_{}\colon [0,T]\times \T^d\rightarrow \R$. Here, $\rho\mapsto \rho^{\gamma}$ describes the local pressure with  $\gamma > 1$ fixed. Moreover, for fixed $t\in [0,T]$, the expression 
$$
M_{\rho}:=\underset{\T^d}{\dashint}\rho(t,x)\diff x
$$ 
denotes the spatial average over the density, which is constant in time $t$. The sign of the constant next to the term $\rho\nabla_x\Phi_\rho$ corresponds to attractive or repulsive potentials. We will see, however, that there are differences in analysis of these two possibilities. Let us also remark, that in the case of $b = 0$ the system \eqref{int:eulerpoisson} reduces to the Euler system and \eqref{int:kellersegel} to the porous media equation.\\
\\
Our considerations come from the recent result by Lattanzio and Tzavaras \cite{lattanzio2017fromgas}, where the authors consider a convergence of weak solutions of \eqref{int:eulerpoisson}, \eqref{int:eulerkorteweg} to \eqref{int:kellersegel}, \eqref{int:cahnhillard} respectively. The use of the framework of dissipative measure-valued solutions is the main novelty of the current paper. As we can show rigorously the existence of the dissipative measure-valued solutions, the asymptotic result is not formal anymore, like it was in the case of weak solutions. Overall, interest in diffusive equations, such as \eqref{int:kellersegel} or \eqref{int:cahnhillard} was spurred by papers of Jordan et al. \cite{jordan1998thevariational} as well as of Otto \cite{otto2001thegeometry} in the early two-thousands, where they introduced the use of Wasserstein space of probability measures. In more recent years the high-friction limit has been used to pass from Euler system to porous media equation (see \cite{Huang2005convergence, huang2011l1convergence, lattanzio2013relative}) as well as for other Euler-type systems \cite{carrillo2020relative, feireisl2023onthehighfriction}. An old survey on relaxation limits from hyperbolic to parabolic systems has been done by Donatelli et al. in \cite{donatelli2004convergence}. Let us also note that both Keller--Segel and Cahn--Hillard systems are still a subject of active research, mostly in well-posedeness framework: existence of weak solutions and their asymptotic behavior (see \cite{winkler2019howunstable, bedrossian2011local, chiyo2022anonlinearattraction, cieslak2008finite, elbar2022degenerate, elbar2022fromvlasov, miranville2019thecahn}).\\
\\
The analysis of both systems \eqref{int:eulerpoisson} and \eqref{int:eulerkorteweg} follows similar lines, and thus we will combine them into one  system, which we call Euler--Korteweg--Poisson system (EKP)
\begin{align}
		\left\{\begin{array}{ll}
		\partial_t\rho+\frac{1}{\eps}\operatorname{div}_x(\rho u)=0,\\
		&\\
		\partial_t(\rho u)+\frac{1}{\eps}\operatorname{div}_x(\rho u\otimes u)+\frac{1}{\eps^2}\rho u=-\frac{a}{\eps}\nabla_x\rho^{\gamma}+\frac{b}{\eps}\rho \nabla_x\Phi_{\rho}+\frac{c}{\eps}\rho\nabla_x(\Delta_x\rho),\\
		&\\
		-\Delta_x\Phi_{\rho}=\rho-M_{\rho}\label{eq:EKP}
		\end{array}\right.
\end{align}
for $a, \varepsilon > 0$,  $c \geq 0$, $b\in\R$ as fixed quantities, $T$ denoting the time. The space on which we will consider the aforementioned equation is a flat torus $\T^d$ as to evade unnecessary problems on the boundary ($d = 2,3$ is the dimension). Here, one may notice that the solutions of \eqref{eq:EKP} will depend on $\varepsilon$ which will later converge to $0^+$; moreover, initial conditions $\rho_0$ and $u_0$ will also depend on it. Although we intend later to work almost exclusively on the general Euler--Korteweg--Poisson system, it is more convenient to analyze the main result of the current paper in a less general setting. We provide an asymptotic limit from dissipative measure-valued solutions of hydrodynamic systems to strong solutions of diffusive equations. The existence of dissipative measure-valued solutions to all the cases can be shown, but the question of existence of strong solutions is a more complex matter. To the knowledge of the authors, one can show:
\begin{itemize}
    \item global existence of strong solutions to chemo-repulsive Keller--Segel and porous media equations (Appendix \ref{appendixA}),
    \item local existence of strong solutions to chemo-attractive Keller--Segel equations (see \cite[Lemma 1.2]{cieslak2008finite})
    \item global existence of strong solutions for small initial data to Cahn--Hillard equations (see \cite[Theorem 21]{dang1995stability}).
\end{itemize}
 Before moving forward let us notice that we may reformulate the equations into a much more convenient divergence form.
\begin{rem}
 For strong solutions $(\rho,u,\Phi)\in C^2$ of (\ref{eq:EKP}) we can infer from the Poisson equation that
\begin{align}\label{identityforPhi}
	(\rho-M_{\rho})\nabla_x\Phi=\frac{1}{2}\nabla_x|\nabla_x\Phi|^2-\operatorname{div}_x(\nabla_x\Phi\otimes\nabla_x\Phi).
\end{align}
Moreover, the identity
\begin{align}\label{identityforgradlaplace}
    \rho\nabla_x(\Delta_x\rho)=\nabla_x\left(\frac{1}{2}|\nabla_x\rho|^2+\rho\Delta_x\rho \right)-\operatorname{div}_x(\nabla_x\rho\otimes\nabla_x\rho)
\end{align}
holds.
\end{rem}
\noindent Hence, we can write (\ref{eq:EKP}) equivalently as
\begin{equation}
	\partial_t\rho+\frac{1}{\eps}\operatorname{div}_x(\rho u)=0,
\end{equation}
\begin{equation}
    \begin{aligned}
        &\partial_t(\rho u)+\frac{1}{\eps}\operatorname{div}_x(\rho u\otimes u)+\frac{1}{\eps^2}\rho u=-\frac{a}{\eps}\nabla_x\rho^{\gamma} + \frac{b}{\eps}\operatorname{div}_x\left(\frac{1}{2}|\nabla_x\Phi_\rho|^2\mathbb{I}_d-\nabla_x\Phi_\rho\otimes\nabla_x\Phi_\rho \right)\\
		&+\frac{b}{\eps}\,M_{\rho}\nabla_x\Phi_{\rho} + \frac{c}{\eps}\operatorname{div}_x\left(\frac{1}{2}|\nabla_x\rho|^2\mathbb{I}_d+\rho\Delta_x\rho\mathbb{I}_d-\nabla_x\rho\otimes\nabla_x\rho \right),
    \end{aligned}
\end{equation}
\begin{equation}
    	-\Delta_x\Phi_{\rho}=\rho-M_{\rho}.
\end{equation}
Here, we also introduced the notation $\mathbb{I}_{d}$ for the identity matrix of dimension $d$.\\
\\
\noindent Our main aim is to show that from \eqref{eq:EKP} we may converge to the combined system of \eqref{int:kellersegel} and \eqref{int:cahnhillard}, which we call Cahn--Hillard--Keller--Segel system (CHKS). This means, that as $\eps$ goes to zero we obtain in a limit
\begin{equation}
	\begin{aligned}
		\partial_t\rho - \operatorname{div}_x(a\nabla_x(\rho^\gamma) + b\rho\nabla_x \Phi_{\rho} - c\rho\nabla_x(\Delta_x\rho))&=0,\\
		-\Delta_x\Phi_{\rho}&=\rho-M_{\rho}.\label{eq:CHKS}
	\end{aligned}
\end{equation}
\noindent For the passage to the limit system, we use the highly efficient approach of the relative entropy method, which found use in a various different fields, ranging from weak-strong uniqueness problems \cite{woznicki2022weakstrong, wiedemann2017fluid, brenier2011weak, gwiazda2015weakstrong}, to stability studies, asymptotic limits, dimension reduction \cite{bella2014dimension, christoforou2018relative, feireisl2012relative, giesselmann2014singular, giesselemann2017stability}.\\
\\
Finally, in accordance to the discussion above about the notion of strong solutions, we want to make a distinction between two cases.
\begin{Def}\label{def:strongsolutions}
    We will say that $(r, \Phi_r)$ is a strong solution to the system \eqref{eq:CHKS} if
    \begin{itemize}
        \item $(r, \Phi_r)\in C^{5, 1}(\T^d \times [0, T])$, whenever $c > 0$,
        \item $(r, \Phi_r)\in C^{2, 1}(\T^d\times [0, T])$, whenever $c = 0$.
    \end{itemize}
\end{Def}
\noindent The structure of the paper is as follows. In Section 2 we recall the notion of Young measures and provide the definition of dissipative measure-valued solutions. In Section 3 we state the main results: existence of dissipative measure-valued solutions to EKP equations and their convergence in high-friction limit to strong solutions CHKS system. In Section 4 we provide the proof to the former, and in Section 5 to the latter.

\section{Dissipative measure-valued solutions.}

\noindent Let us start this section with a review of the results on Young measures. For details, motivations and examples of particular applications, we refer the reader to papers by Young \cite{young1942generalizedsurfaces, young1942generalized}, Ball \cite{ball1989version} as well as modern reviews \cite[Chapter 3]{muller1999variational}, \cite[Chapter 6]{pedregal2012parametrized} and \cite[Chapter 4]{rindler2018calcvar}.\\

\noindent We start with the most important result that we recall from \cite[Theorem 6.2]{pedregal2012parametrized}.
\begin{thm}\textup{\textbf{(Fundamental Theorem of Young Measures)}}\label{app:fundamental_thm}\\
Let $\Omega \subset \R^n$ be a measurable set and let $z_j: \Omega \to \R^m$ be measurable functions such that 
\begin{equation}\label{app:cond_mass_esc_toinfty}
\sup_{j \in \N} \int_{\Omega} g(|z_j(x)|) \, \diff x < \infty
\end{equation}
for some continuous, nondecreasing function $g:[0,\infty) \to [0,\infty)$ with $\lim_{t\to \infty} g(t) = \infty$. Then, there exists a subsequence (not relabeled) and a weakly-$\ast$ measurable family of probability measures $\nu = (\nu_x)_{x \in \Omega}$ with the property that whenever the sequence $\{\psi(x,z_j(x))\}_{j \in \N}$ is weakly compact in $L^1(\Omega)$ for a Carath\'eodory function $\psi: \Omega \times \R^m \to \R$, we have
\begin{equation}\label{app:weaklimitL1}
\psi(x,z_j(x)) \rightharpoonup \int_{\R^m} \psi(x,\lambda) \, \diff \nu_x(\lambda) \qquad \mbox{ weakly in } L^1(\Omega).
\end{equation}
We say that the sequence $\{z_j\}_{j \in \N}$ generates the family of Young measures $\{\nu_x\}_{x\in \Omega}$. 
\end{thm}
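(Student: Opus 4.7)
The plan is to realize the sequence $\{z_j\}$ as a sequence of elementary Young measures $\nu^j_x := \delta_{z_j(x)}$ and extract a weak-$\ast$ limit in an appropriate dual space. The natural framework is the duality between the separable Banach space $L^1(\Omega; C_0(\R^m))$ and its dual $L^\infty_w(\Omega; \M(\R^m))$, the space of weakly-$\ast$ measurable families of Radon measures essentially bounded in total variation. Each $\nu^j$ lies in the unit ball of this dual via the action $\langle \nu^j, \varphi\rangle = \int_\Omega \varphi(x, z_j(x))\, \diff x$, so the Banach--Alaoglu theorem yields a subsequence (not relabeled) and a family $\nu = (\nu_x)_{x\in\Omega}$ with $\nu^j \wstar \nu$; that is,
\begin{equation*}
\int_\Omega \varphi(x, z_j(x))\, \diff x \to \int_\Omega \int_{\R^m} \varphi(x,\lambda)\, \diff\nu_x(\lambda)\, \diff x
\end{equation*}
for every $\varphi \in L^1(\Omega; C_0(\R^m))$.

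Next I would verify that $\nu_x$ is a probability measure for a.e.\ $x$. Testing against nonnegative cutoffs $\eta_R \in C_c(\R^m)$ with $\eta_R \uparrow 1$ shows $\nu_x(\R^m) \le 1$. For the opposite inequality the coercivity hypothesis \eqref{app:cond_mass_esc_toinfty} provides tightness: for any measurable $A \subset \Omega$ of finite measure,
\begin{equation*}
\int_A \bigl(1 - \eta_R(z_j(x))\bigr)\, \diff x \le \int_A \mathbf{1}_{\{|z_j(x)| > R\}} \, \diff x \le \frac{1}{g(R)} \sup_j \int_\Omega g(|z_j(x)|)\, \diff x \xrightarrow{R\to\infty} 0
\end{equation*}
uniformly in $j$, since $g$ is nondecreasing with $g(R)\to\infty$. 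Passing to the weak-$\ast$ limit and then letting $R\to\infty$ by monotone convergence yields $\int_A \nu_x(\R^m)\, \diff x \ge |A|$, hence $\nu_x(\R^m) = 1$ a.e., and one covers $\Omega$ by sets of finite measure to conclude globally.

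It remains to upgrade the weak-$\ast$ convergence from $\varphi \in L^1(\Omega; C_0(\R^m))$ to the broader class of Carath\'eodory maps $\psi$ for which $\{\psi(\cdot, z_j(\cdot))\}_j$ is weakly $L^1$-compact. I would perform a truncation $\psi_R := \psi\,\eta_R$ combined with a Scorza--Dragoni approximation: on a compact exhaustion of $\Omega$ where $\psi$ is (jointly) continuous, $\psi_R$ falls into the test class used above, so the weak-$\ast$ convergence already established applies directly. The error $\psi - \psi_R$ is controlled on the pre-limit side by the Dunford--Pettis characterization of weak $L^1$-compactness (uniform integrability) and on the limit side by the tightness of $\nu$ just proved; both tend to zero uniformly in $j$ as $R\to\infty$, and a standard diagonal argument in $R$ and the Scorza--Dragoni parameter closes the proof.

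The principal obstacle is this last extension: the weak $L^1$ convergence demanded by the conclusion is not automatic from dual-space weak-$\ast$ convergence, because the relevant test functions are neither continuous nor compactly supported on $\R^m$. Successfully transferring the convergence to this setting relies crucially on \emph{both} directions of the Dunford--Pettis theorem together with the tightness of the limiting Young measures coming from \eqref{app:cond_mass_esc_toinfty}; Scorza--Dragoni is the technical device that handles the merely measurable dependence of $\psi$ on $x$.
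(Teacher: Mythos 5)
This theorem is not proved in the paper at all: it is quoted verbatim from the cited source (Pedregal, Theorem 6.2; see also Ball's 1989 version), so there is no in-paper argument to compare against. Your outline reproduces the standard proof given in those references --- realize $z_j$ as elementary Young measures $\delta_{z_j(x)}$ in the dual $L^\infty_{\operatorname{w}}(\Omega;\mathcal{M}(\R^m))$ of the separable space $L^1(\Omega;C_0(\R^m))$, extract a weak-$\ast$ limit by Banach--Alaoglu, use \eqref{app:cond_mass_esc_toinfty} to rule out loss of mass and get probability measures, then pass to general Carath\'eodory integrands via truncation, Scorza--Dragoni, and Dunford--Pettis --- and, as a sketch, it is correct; the only point to flesh out is the limit-side tail estimate, which needs a Fatou/monotone-convergence step (applying the established convergence to truncations of $|\psi|$) rather than tightness of $\nu$ alone.
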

\begin{rem}
    For further reference we introduce the notation
    $$
    \langle\nu_x, \psi\rangle := \int_{\R^m} \psi(x,\lambda) \, \diff \nu_x(\lambda).
    $$
\end{rem}
\begin{rem}
Condition \eqref{app:cond_mass_esc_toinfty} prevents the mass to accumulate at infinity. In \cite[Chapter 3]{muller1999variational} it was formulated as 
$$
\lim_{M \to \infty} \sup_{j \in \N} \left| \left\{ x\in \Omega: \, |z_j(x)|\geq M \right\} \right| = 0.
$$
Similarly, in \cite[Chapter 6]{pedregal2012parametrized} the condition \eqref{app:cond_mass_esc_toinfty} was replaced with boundedness of the sequence $\{z_j\}_{j\in \N}$ in $L^p(\Omega)$ for some $1\leq p \leq \infty$ which is slightly weaker.
\end{rem}
\begin{rem}\label{app:weak_lim_identification}
Let $\{z_j\}_{j \in \Omega}$ be a sequence converging weakly to some $z$ in $L^p(\Omega; \R^m)$ where $\Omega$ is a bounded domain and $1 < p \leq \infty$ (weakly-$\ast$ in case $p=\infty$). If we let $\psi(x,u) = u_i$ (i.e. $i$-th component of $u \in \R^m$) for $i = 1, ..., m$, we have that $\{\psi(x,z_j)\}_{j \in \N}$ is weakly compact in $L^1(\Omega)$ and thus by Theorem \ref{app:fundamental_thm} we obtain
$$
z_j(x) \rightharpoonup \int_{\R^m} \lambda \diff \nu_{x}(\lambda) = z(x),
$$
where the latter equality follows by uniqueness of weak limits. More generally, suppose that $\psi$ is such that $\{\psi(x,z_j)\}_{j \in \N}$ is bounded in $L^p(\Omega; \R^m)$ for $1 < p \leq \infty$. Then, up to a subsequence,
\begin{equation}\label{app:weakconv_car_lp}
\psi(x,z_j) \rightharpoonup \int_{\R^m} \psi(x,\lambda) \diff \nu_{x}(\lambda),
\end{equation}
where the weak convergence is replaced by the weak-$\ast$ one in case $p = \infty$.
\end{rem}
\noindent Often times one cannot guarantee the $L^p$ bound of the sequence with $p > 1$ and the concentrations might appear. We cover a very specific, but sufficient for this work case in the next lemma.
\begin{lem}\label{app:concentration}
Suppose that $f$ is a continuous function and $\{f(z_j)\}_{j\in\N}$ is bounded in $L^\infty((0, T); L^1(\Omega))$, then there exists $m^f\in L^\infty((0, T); \mathcal{M}(\Omega))$, such that (up to the subsequence)
$$
f(z_j) - \langle\nu_{t, x}, f\rangle \overset{*}{\rightharpoonup} m^f \qquad\text{ weakly-$\ast$ in }L^\infty((0, T); \mathcal{M}(\Omega)).
$$
\end{lem}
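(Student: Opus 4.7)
The plan is to obtain $m^f$ as a weak-$\ast$ Banach--Alaoglu limit in the appropriate dual pairing and then subtract off the ``Young-measure barycenter'' $\langle \nu_{t,x}, f\rangle$. Since $\Omega$ is (a subset of) the flat torus, we have $\mathcal{M}(\Omega) \cong C(\Omega)^\ast$, and $L^1((0,T); C(\Omega))$ is a separable Banach space whose dual is identified with $L^\infty((0,T); \mathcal{M}(\Omega))$. The isometric embedding $L^1(\Omega) \hookrightarrow \mathcal{M}(\Omega)$, $g \mapsto g\,dx$, places $\{f(z_j)\}$ into a bounded set of this dual. Banach--Alaoglu then provides a subsequence and a limit $\mu^f$ with $f(z_j) \wstar \mu^f$ in $L^\infty((0,T); \mathcal{M}(\Omega))$.

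Next, I would verify that the barycenter $\langle \nu_{t,x}, f\rangle$ is well-defined and lies in $L^\infty((0,T); L^1(\Omega))$. Truncate by $f_k := (-k) \vee f \wedge k$; since each $f_k$ is bounded and continuous, $\{f_k(z_j)\}$ is dominated and hence equi-integrable. Applying Theorem \ref{app:fundamental_thm} to $\psi(t,x,\lambda) := |f_k(\lambda)|$ yields
$$|f_k(z_j)| \rightharpoonup \langle \nu_{t,x}, |f_k|\rangle \quad \text{weakly in } L^1((0,T)\times \Omega).$$
Testing this convergence against $\varphi(t)\otimes\mathbbm{1}_\Omega(x)$ for arbitrary non-negative $\varphi \in L^1(0,T)$ and invoking $\int_\Omega |f_k(z_j(t,\cdot))| \, dx \leq C$ for a.e. $t$, one obtains $\int_\Omega \langle \nu_{t,\cdot}, |f_k|\rangle \, dx \leq C$ for a.e. $t$. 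Monotone convergence as $k \to \infty$ then gives $\langle \nu_{t,x}, f\rangle \in L^\infty((0,T); L^1(\Omega)) \hookrightarrow L^\infty((0,T); \mathcal{M}(\Omega))$.

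Finally, I would set $m^f := \mu^f - \langle \nu_{t,x}, f\rangle \in L^\infty((0,T); \mathcal{M}(\Omega))$. Since the subtrahend does not depend on $j$, subtracting it from $f(z_j) \wstar \mu^f$ immediately yields the desired convergence. The main technical difficulty is the middle step: the absence of equi-integrability of $\{f(z_j)\}$ (which is precisely what allows the concentration $m^f$ to form and makes the lemma non-trivial) prevents a direct application of Theorem \ref{app:fundamental_thm} to $f$ itself, so one has to route through the truncation-plus-monotone-convergence argument to ensure $\langle \nu_{t,x}, f\rangle$ does not blow up.
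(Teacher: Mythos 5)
Your proposal is correct and takes essentially the same route as the paper: embed $L^{\infty}((0,T);L^{1}(\Omega))$ into $L^{\infty}((0,T);\mathcal{M}(\Omega))$, extract a weak-$\ast$ limit $\mu^{f}$ via Banach--Alaoglu in the duality with $L^{1}((0,T);C(\Omega))$, and set $m^{f}=\mu^{f}-\langle\nu_{t,x},f\rangle$; your truncation-plus-monotone-convergence step showing $\langle\nu_{t,x},f\rangle\in L^{\infty}((0,T);L^{1}(\Omega))$ just makes explicit a detail the paper leaves implicit. One cosmetic fix: the test functions $\varphi(t)$ in that step should be taken in $L^{\infty}(0,T)$ (e.g.\ indicators) rather than $L^{1}(0,T)$, since weak $L^{1}((0,T)\times\Omega)$ convergence is tested against bounded functions; the conclusion is unchanged.
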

\begin{proof}
From the natural embedding
$$
L^\infty((0, T); L^1(\Omega)) \hookrightarrow L^\infty((0, T); \mathcal{M}(\Omega))
$$
and the Banach--Alaoglu Theorem we obtain
$$
f(z_j) \overset{*}{\rightharpoonup} f_\infty \qquad\text{ weakly-$\ast$ in }L^\infty((0, T); \mathcal{M}(\Omega))
$$
up to the subsequence. Setting $m^f = f_\infty - \langle\nu_{t,x}, f\rangle$ completes the proof.
\end{proof}
\noindent As a short-hand notation we will write
\begin{align}
    \overline{f}(t, x) = \langle\nu_{t, x}, f(\lambda)\rangle + m^f(t)(\diff x).
\end{align}
\noindent Note that, the concentration measure $m^f \equiv 0$ if the sequence $\{f(z_j)\}$ is weakly relatively precompact in $L^1(\Omega)$. In our case, we shall use dummy variables $\lambda = (s, v, F, G) \in [0,\infty) \times \R^d\times \R^d\times \R^d$ to express the ones in the equation. As an example 
\begin{align*}
\overline{\rho} &= \langle\nu, s\rangle \\
\overline{\rho u} &= \langle\nu, sv\rangle + m^{\rho u}\\
\overline{\nabla_x\Phi_{\overline{\rho}}} &= \langle\nu, F\rangle\\
\overline{\rho\nabla_x\rho} &= \langle\nu, sG\rangle + m^{\rho\nabla_x\rho}
\end{align*}
and similarly for other terms. To establish some inequalities for relative entropies we also need the following proposition.
\begin{prop}\label{app:concetrationmeasuresbounding}\textup{\textbf{(\cite{gwiazda2020dissipative}, Proposition 3.3})}\\
Let $\nu_x$ be a Young measure generated by the sequence $\{z_j\}_{j\in\N}$. If two continuous functions \mbox{$f_1: \R^m \rightarrow \R^k$}, $f_2:\R^m\rightarrow \R$ satisfy $|f_1(z)|\leq f_2(z)$ for any $z\in \R^m$ and if $\{f_2(z_j)\}_{j\in\N}$ is uniformly bounded in $L^1(\Omega)$, then we have
$$
|m^{f_1}|(A) \leq m^{f_2}(A)
$$
for any Borel set $A\subset\Omega$.
\end{prop}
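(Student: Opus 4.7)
My plan is to first establish the measure-theoretic inequality $|\mu^{f_1}|\leq \mu^{f_2}$ for the full weak-$\ast$ limits $\mu^{f_i}$ of $f_i(z_j)$ as Radon measures on $\Omega$, and then separate the Lebesgue-singular parts on both sides so as to isolate the concentration measures.

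For the first step, since $|f_1|\leq f_2$ and $\{f_2(z_j)\}$ is bounded in $L^1(\Omega)$, the sequence $\{f_1(z_j)\}$ is bounded in $L^1(\Omega;\R^k)$. Banach--Alaoglu then produces, along a subsequence, weak-$\ast$ limits $\mu^{f_1}\in\M(\Omega;\R^k)$ and a nonnegative $\mu^{f_2}\in\M(\Omega)$. Testing against any $\varphi\in C_c(\Omega;\R^k)$ with $|\varphi(x)|\leq 1$ I would write
\[
\left| \int_\Omega \varphi\cdot f_1(z_j)\diff x \right| \leq \int_\Omega |\varphi|\, f_2(z_j)\diff x,
\]
and pass to the limit to obtain $|\langle \mu^{f_1},\varphi\rangle|\leq \langle \mu^{f_2},|\varphi|\rangle$. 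Taking the supremum over all such $\varphi$ with support in an open $U$ gives $|\mu^{f_1}|(U)\leq \mu^{f_2}(U)$, and outer regularity of Radon measures extends this to every Borel $A\subset\Omega$.

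Next, by Lemma \ref{app:concentration} we have $\mu^{f_i}=\langle\nu_x,f_i\rangle\diff x + m^{f_i}$. I would then argue that each concentration measure $m^{f_i}$ is singular with respect to Lebesgue measure by invoking Chacon's biting lemma: one splits $f_i(z_j)=g_j^i+h_j^i$ with $\{g_j^i\}$ equiintegrable in $L^1$ and $\supp h_j^i\subset E_j$ where $|E_j|\to 0$. The weakly $L^1$-convergent part $g_j^i$ recovers the absolutely continuous density $\langle\nu_x,f_i\rangle$ of $\mu^{f_i}$, while the weak-$\ast$ limit of $h_j^i\diff x$ is supported on a Lebesgue null set and hence singular. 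Therefore
\[
|\mu^{f_1}| = |\langle\nu_x,f_1\rangle|\diff x + |m^{f_1}|, \qquad \mu^{f_2} = \langle\nu_x,f_2\rangle\diff x + m^{f_2},
\]
and comparing the mutually singular parts in $|\mu^{f_1}|\leq \mu^{f_2}$ yields the desired bound $|m^{f_1}|(A)\leq m^{f_2}(A)$ for every Borel $A$.

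The main delicate point I expect is the rigorous identification of $m^{f_i}$ as a Lebesgue-singular measure, which rests on the biting-lemma decomposition above; once that is in place the measure-theoretic comparison via test functions is routine.
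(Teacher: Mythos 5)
Your first step is sound: testing against $\varphi\in C_c(\Omega;\R^k)$ with $|\varphi|\le 1$ and invoking outer regularity does give $|\mu^{f_1}|(A)\le\mu^{f_2}(A)$ for the full weak-$\ast$ limits. The argument breaks precisely at the point you flag as delicate: the claim that $m^{f_1}$, $m^{f_2}$ are singular with respect to Lebesgue measure is false in general, and the biting-lemma reasoning does not deliver it, because small support is not preserved under weak-$\ast$ limits: the measures $h^i_j\,\diff x$ are carried by sets $E_j$ with $|E_j|\to 0$, yet their weak-$\ast$ limit can be diffuse. Concretely, on $\Omega=(0,1)$ take $z_j=j\,\mathbbm{1}_{A_j}$ with $A_j$ the union of $j^2$ intervals of length $j^{-3}$ centred at the points $k/j^2$, $k=1,\dots,j^2$, and $f_1=f_2=\mathrm{id}$ (so $|f_1|\le f_2$ and $\|f_2(z_j)\|_{L^1}=1$). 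Then $z_j\to 0$ in measure, so $\nu_x=\delta_0$ and $\langle\nu_x,f_i\rangle=0$, while $f_i(z_j)\,\diff x\wstar\diff x$; hence $m^{f_1}=m^{f_2}=\diff x$ is absolutely continuous. In such a situation the identity $|\mu^{f_1}|=|\langle\nu_x,f_1\rangle|\,\diff x+|m^{f_1}|$ fails and there are no mutually singular parts to compare, so the final step of your proof collapses (the conclusion of the proposition holds there, but not by your route).

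The statement is nevertheless true, and the standard repair is a truncation rather than a Lebesgue decomposition; note that the present paper does not prove the proposition but imports it from \cite{gwiazda2020dissipative}, where the argument is of this truncation type. Choose continuous cutoffs $\theta_R:\R^m\to[0,1]$ with $\theta_R=1$ on $\{|z|\le R\}$ and $\theta_R=0$ on $\{|z|\ge 2R\}$, and split $f_i(z_j)=\theta_R(z_j)f_i(z_j)+(1-\theta_R(z_j))f_i(z_j)$. The truncated parts are uniformly bounded, so by Theorem \ref{app:fundamental_thm} they converge (weakly-$\ast$ in $L^\infty$, along a diagonal subsequence in $R$) to $\langle\nu_x,\theta_R f_i\rangle$ and carry no concentration; the tails obey the pointwise domination $|(1-\theta_R)f_1|\le(1-\theta_R)f_2$, so your Step 1 applied to them gives $|\mu^{f_1}_R|\le\mu^{f_2}_R$ for their weak-$\ast$ limits, and moreover $m^{f_i}=\mu^{f_i}_R-\langle\nu_x,(1-\theta_R)f_i\rangle\,\diff x$ for every $R$. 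Since $\langle\nu_x,f_2\rangle\in L^1(\Omega)$, dominated convergence gives $\langle\nu_x,(1-\theta_R)f_2\rangle\to 0$ in $L^1(\Omega)$ as $R\to\infty$; testing with $\varphi\in C_c(U;\R^k)$, $|\varphi|\le 1$, one gets $\bigl|\int_U\varphi\cdot\diff m^{f_1}\bigr|\le m^{f_2}(U)+2\|\langle\nu_x,(1-\theta_R)f_2\rangle\|_{L^1}$, and letting $R\to\infty$ and taking the supremum over $\varphi$ yields $|m^{f_1}|(U)\le m^{f_2}(U)$ for open $U$, hence for all Borel sets by outer regularity.
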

\noindent The proposition above might be used, for instance, to establish the following bounds
\begin{align*}
    |m^{\rho u}|&\leq m^\rho + m^{\rho|u|^2}\\
    |m^{\rho u\otimes u}|&\leq m^{\rho|u|^2} 
\end{align*}
and the similar ones appearing later in the article. Now we may proceed to define the desired notion of solutions.

\begin{Def}\label{def:measurevaluedsolution}
	A measure $(\nu, m)$ is a dissipative measure-valued solution of the Euler--Korteweg--Poisson system (\ref{eq:EKP}) with measure-valued initial data $(\nu_0,m_0)$ if
	$$
	m=\left(m^{\rho u},m^{\rho u\otimes u},m^{|\nabla_x \Phi_{\rho}|^2},m^{\nabla_x\Phi_\rho\otimes \nabla_x\Phi_\rho}, m^{\rho^{\gamma}}, m^{|\nabla_x\rho|^2}, m^{\rho\nabla_x\rho}, m^{\nabla_x\rho\otimes\nabla_x\rho}\right)
	$$ 
	and
	\begin{align*}
		\nu&\in L^{\infty}_{\operatorname{w}}\big((0,T)\times \T^d;\mathcal{M}^+\big([0,\infty)\times \R^d\times \R^d\times\R^d\big)\big),\\
		m^{\rho^{\gamma}},\ m^{|\nabla_x\Phi_\rho|^2},\ m^{|\nabla_x\rho|^2}&\in L^{\infty}\big((0,T);\mathcal{M}^+(\T^d)\big),\\
		m^{\rho u},\ m^{\rho\nabla_x\rho}&\in L^{\infty}\big((0,T);\mathcal{M}(\T^d)^d\big),\\
		m^{\rho u\otimes u},\ m^{\nabla_x\Phi_\rho\otimes \nabla_x\Phi_\rho},\ m^{\nabla_x\rho\otimes\nabla_x\rho}&\in L^{\infty}\big((0,T);\mathcal{M}(\T^d)^{d\times d}\big),
	\end{align*}
	as well as the following relations hold 
	\begin{itemize}
	\item[(a)]\textit{Mass equation:}
	\begin{align}
		\int_{\T^d}^{}\overline{\rho}\psi(\tau,\cdot)\diff x-\int_{\T^d}^{}\overline{\rho_0}\psi(0,\cdot)\diff x=\int_{0}^{\tau}\int_{\T^d}^{}\overline{\rho}\partial_t\psi+\frac{1}{\eps}\overline{\rho u}\cdot\nabla\psi\,\diff x\diff t\label{eq:massmvs}
	\end{align}
	for a.e.~$\tau\in(0,T)$ and all $\psi\in C^1([0,T]\times \T^d)$.\\
	\item[(b)]\textit{Momentum equation:}
	\begin{equation}
		\begin{aligned}
			&\int_{\T^d}^{}\overline{\rho u}\cdot \varphi(\tau,\cdot)\diff x-\int_{\T^d}^{}\overline{\rho_0u_0}\cdot\varphi(0,\cdot)\diff x=\int_{0}^{\tau}\int_{\T^d}^{}\overline{\rho u}\cdot\partial_t\varphi+\frac{1}{\eps}\overline{\rho u\otimes u}:\nabla_x\varphi-\frac{1}{\eps^2}\overline{\rho u}\cdot \varphi\diff x\diff t\\
			&+\int_0^\tau\int_{\T^d}\frac{a}{\eps}\overline{\rho^{\gamma}}\operatorname{div}_x\varphi\,\diff x\diff t - \int_{0}^{\tau}\int_{\T^d}^{}\frac{b}{2\eps}\overline{|\nabla_x\Phi_{\overline{\rho}}|^2}\operatorname{div}_x\varphi+\frac{b}{\eps}\overline{\nabla_x\Phi_{\overline{\rho}}\otimes\nabla_x\Phi_{\overline{\rho}}}:\nabla_x\varphi\diff x\diff t\\
			&+\int_0^\tau\int_{\T^d}\frac{b}{\eps}M_{\overline{\rho}}\overline{\nabla_x\Phi_{\overline{\rho}}}\cdot\varphi\,\diff x\diff t+\int_{0}^{\tau}\int_{\T^d}^{}\frac{c}{2\eps}\overline{|\nabla_x\rho|^2}\operatorname{div}_x\varphi+\frac{c}{\eps}\overline{\rho\nabla_x\rho}\nabla_x(\operatorname{div}_x\varphi)\diff x\diff t\\
			&+\int_0^\tau\int_{\T^d}\frac{c}{\eps}\overline{\nabla_x\rho\otimes\nabla_x\rho}:\nabla_x\varphi\,\diff x\diff t\label{eq:momentummvs}
		\end{aligned}
	\end{equation}
	for a.e.~$\tau\in(0,T)$ and all $\varphi\in C^1\big([0,T]\times \T^d;\R^d\big)$.\\
	\item[(c)]\textit{Poisson equation:} For a.e.~$\tau\in(0,T)$ and all $\chi\in C^1(\T^d)$ it holds that
	\begin{align}
	    \int_{\T^d}^{}\langle\nu_{(\tau,\cdot)},F\rangle\cdot\nabla_x\chi\,\diff x=\int_{\T^d}^{}\overline{\rho}\,\chi\,\diff x-M_{\overline{\rho}}\,\int_{\T^d}^{}\chi\,\diff x.\label{eq:poissonmvs}
	\end{align}
	Moreover, there exists some $\Phi_{\overline{\rho}}\in L^2\big((0,T);W^{2,2}(\T^d)\big)$ such that
	\begin{align}
	    \nabla_x\Phi_{\overline{\rho}}=\langle\nu,F\rangle\label{eq:nablaPhiidentity}
	\end{align}
	a.e.~on $(0,T)\times \T^d$.
	\item[(d)]\textit{Energy inequality}: There exists a measure $m^{\rho^{}|u^{}|^2}\in L^{\infty}\big((0,T);\mathcal{M}^+(\T^d)\big)$ such that
	\begin{equation}
	    \begin{aligned}
	        &\int_{\T^d}^{}\left(\frac{1}{2}\overline{\rho^{}|u^{}|^2}+\frac{a}{\gamma-1}\overline{\rho^{\gamma}}-\frac{b}{2}\overline{|\nabla\Phi^{}|^2}+\frac{c}{2}\overline{|\nabla\rho^{}|^2}\right)(\tau, x)\diff x+\frac{1}{\eps^2}\int_{0}^{\tau}\int_{\T^d}^{}\overline{\rho |u|^2}\,\diff x\diff t\\
	        &\leq \int_{\T^d}^{}\frac{1}{2}\overline{\rho_0^{}|u_0|^2}+\frac{a}{\gamma-1}\overline{\rho_0^{\gamma}}-\frac{b}{2}\overline{|\nabla\Phi_0^{}|^2}+\frac{c}{2}\overline{|\nabla\rho_0^{}|^2}\,\diff x\label{eq:energyinequalitymvs}
	    \end{aligned}
    \end{equation}
	for a.e.~$\tau\in(0,T)$.
	\end{itemize}
	All the above integrals have to exist as part of the definition. In particular, if $c = 0$, then
 $$
 \overline{\rho} := \langle\nu_{t,x}, s\rangle \in L^\infty((0, T); L^\gamma(\T^d))
 $$
 and if $c \neq 0$, then
 \begin{align}\label{def:sobolevregularitydensity}
 \overline{\rho} := \langle\nu_{t, x}, s\rangle \in L^\infty((0, T); W^{1, 2}(\T^d)).
 \end{align}
\end{Def}

\noindent Before stating the main theorems, we recall a property of dissipative measure-valued solutions, that is going to be necessary while treating the case $b > 0$.

\begin{lem}\label{lem:boundfornegativebees}\textup{\textbf{(Lemma 3.7, \cite{lattanzio2017fromgas})}}\\
Suppose $\gamma > 2 - \frac{2}{d}$, $(\overline{\rho}, \Phi_{\overline{\rho}})$ are as in Definition \ref{def:measurevaluedsolution} and $(r, \Phi_r)$ are as in Definition \ref{def:strongsolutions}. Then, there exists a constant $K > 0$ (independent of $\varepsilon)$ such that 
$$
\int_{\T^d}\overline{|\nabla_x \Phi_{\overline{\rho}} - \nabla_x \Phi_r|^2}\diff x \leq K\int_{\T^d}h(\overline{\rho}|r)\diff x,
$$
where 
\begin{align}\label{lem:definitionofh}
h(r) := \frac{a}{\gamma - 1}r^\gamma, \qquad h(\overline{\rho}|r) := h(\overline{\rho}) - h(r) - h'(r)(\overline{\rho} - r).
\end{align}
\end{lem}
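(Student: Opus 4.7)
The strategy is to isolate the deterministic piece of the bar expression, bound it by an $L^p$ norm of $\overline{\rho}-r$ via the Poisson equation, and then dominate that norm by the relative entropy through a case split.

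First, using \eqref{eq:nablaPhiidentity} and expanding the square,
$$\overline{|\nabla_x\Phi_{\overline{\rho}} - \nabla_x\Phi_r|^2} = |\nabla_x\Phi_{\overline{\rho}} - \nabla_x\Phi_r|^2 + \bigl(\langle\nu, |F|^2\rangle - |\nabla_x\Phi_{\overline{\rho}}|^2\bigr) + m^{|F|^2},$$
where the identity $m^{|F - \nabla_x\Phi_r|^2} = m^{|F|^2}$ is used since $\nabla_x\Phi_r$ is continuous and bounded. Both parenthetical terms are non-negative. I claim they vanish under $\gamma > 2-2/d$: for any approximating sequence $\rho_n$ bounded in $L^\infty_t L^\gamma_x$, elliptic regularity gives $\nabla_x\Phi_{\rho_n}$ bounded in $L^\infty_t W^{1,\gamma}_x$, and Sobolev embedding promotes this to $L^\infty_t L^q_x$ with $q > 2$ (this uses $\gamma > 2d/(d+2)$, which is implied by the hypothesis). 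Equi-integrability of $|\nabla_x\Phi_{\rho_n}|^2$ then forces $m^{|F|^2}=0$, and Rellich--Kondrachov delivers strong $L^2$ convergence $\nabla_x\Phi_{\rho_n} \to \nabla_x\Phi_{\overline{\rho}}$, collapsing the $F$-marginal of $\nu$ to $\delta_{\nabla_x\Phi_{\overline{\rho}}}$ and killing the Jensen defect.

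Second, I would estimate the surviving deterministic piece. Set $\psi = \Phi_{\overline{\rho}} - \Phi_r$, normalized to zero mean, so that $-\Delta\psi = (\overline{\rho} - r) - (M_{\overline{\rho}} - M_r)$ and $\int_{\T^d}\psi\,\diff x = 0$. Integration by parts combined with H\"older gives
$$\int_{\T^d}|\nabla_x\psi|^2\,\diff x = \int_{\T^d}\psi(\overline{\rho}-r)\,\diff x \leq \|\psi\|_{L^{p'}}\|\overline{\rho} - r\|_{L^p},$$
with $p = 2d/(d+2)$ and $p' = 2d/(d-2)$. The Sobolev--Poincar\'e inequality $\|\psi\|_{L^{p'}} \leq C\|\nabla_x\psi\|_{L^2}$ then yields $\|\nabla_x\psi\|_{L^2}^2 \leq C\|\overline{\rho}-r\|_{L^p}^2$.

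Third, I would prove $\|\overline{\rho} - r\|_{L^p}^2 \leq K\int_{\T^d}h(\overline{\rho}|r)\,\diff x$ by splitting $\T^d = A \cup B$ with $A = \{\overline{\rho} \leq 2\|r\|_{L^\infty}\}$. On $A$, Taylor expansion of $h$ around $r$, together with the uniform positivity and boundedness of $r$ afforded by Definition \ref{def:strongsolutions} and a maximum principle argument, gives $h(\overline{\rho}|r) \geq c(\overline{\rho}-r)^2$, and H\"older yields $\int_A|\overline{\rho}-r|^p\,\diff x \leq C\bigl(\int h\,\diff x\bigr)^{p/2}$. On $B$ one has $h(\overline{\rho}|r) \geq c\overline{\rho}^\gamma$ and $|\overline{\rho}-r|^p \leq C\overline{\rho}^p \leq C\overline{\rho}^\gamma$, using $\gamma \geq p$ (implied by $\gamma > 2-2/d$) and $\overline{\rho} \geq 1$ on $B$ up to adjusting the splitting threshold, so $\int_B|\overline{\rho}-r|^p\,\diff x \leq C\int h\,\diff x$. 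Since $\int h\,\diff x$ is a priori bounded by the energy inequality \eqref{eq:energyinequalitymvs} and $2/p > 1$, the term $(\int h\,\diff x)^{2/p}$ is absorbed into $C\int h\,\diff x$, and the lemma follows by chaining the three steps. The main obstacle is the first step: the definition of dissipative measure-valued solution does not by itself force $\nu$ to be Dirac in $F$, so one has to revisit the construction via approximating sequences, and the threshold $\gamma > 2-2/d$ enters precisely there to put $\nabla_x\Phi_{\rho_n}$ into an $L^q$ space with $q$ strictly above $2$.
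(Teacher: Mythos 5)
Note first that the paper does not prove this lemma at all: it is quoted verbatim from Lattanzio--Tzavaras (Lemma 3.7 of \cite{lattanzio2017fromgas}), so your attempt can only be measured against the standard argument there, whose skeleton (test the Poisson equation for $\psi=\Phi_{\overline{\rho}}-\Phi_r$ against itself, Sobolev--Poincar\'e duality with $p=\tfrac{2d}{d+2}$, then compare $\|\overline{\rho}-r\|_{L^p}^2$ with $\int h(\overline{\rho}|r)$ via the quadratic/$\gamma$-growth of $h(\cdot|r)$) you do reproduce in your second and third steps. The genuine gap is in how you close the third step. After the splitting you arrive at $\|\overline{\rho}-r\|_{L^p}^2\lesssim \int h+\bigl(\int h\bigr)^{2/p}$ with $2/p>1$, and you absorb the superlinear term by invoking ``$\int h$ is a priori bounded by the energy inequality \eqref{eq:energyinequalitymvs}''. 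This is circular exactly in the regime where the lemma is needed: for $b>0$ the energy contains the term $-\tfrac b2\overline{|\nabla\Phi|^2}$ with the bad sign, so the energy inequality does \emph{not} yield a bound on $\int \overline{\rho^\gamma}$ (hence on $\int h(\overline{\rho}|r)$) until one has already absorbed that term using precisely this lemma together with $bK/2<1$; moreover your $K$ would then depend on the solution (through the energy bound, hence through $b$ and the data), whereas Theorem \ref{thm:existenceofmeasurevaluedsolutions} needs a $K$ fixed \emph{before} $b$ is chosen. The correct way out, and the actual role of the hypothesis, is to estimate $\|\overline{\rho}-r\|_{L^{2d/(d+2)}}$ by interpolation between $L^1$ (conserved mass) and $L^\gamma$: writing $\tfrac1p=(1-\theta)+\tfrac\theta\gamma$ one finds $\theta=\tfrac{\gamma(d-2)}{2d(\gamma-1)}$, and the exponent of $\int h$ becomes $2\theta/\gamma\le 1$ exactly when $\gamma\ge 2-\tfrac2d$. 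So the threshold $2-\tfrac2d$ enters in this step, to make the bound linear in $\int h$ with $K$ depending only on the mass and on $\rho_*$, $\|r\|_\infty$ --- not, as you claim, in your first step, where only the weaker $\gamma>\tfrac{2d}{d+2}$ is used.

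Two smaller points. In the first step, removing the oscillation and concentration parts of $\overline{|\nabla_x\Phi_{\overline{\rho}}-\nabla_x\Phi_r|^2}$ is indeed not a consequence of Definition \ref{def:measurevaluedsolution}, as you note; but even for the constructed solutions your argument is incomplete: boundedness of $\nabla_x\Phi_{\rho_n}$ in $L^\infty_t W^{1,\gamma}_x$ plus Rellich--Kondrachov gives compactness in $x$ only, and strong $L^2$ convergence in space-time (needed to collapse the $F$-marginal) requires a time-compactness ingredient, e.g.\ Aubin--Lions using $\partial_t\rho_n$ from the continuity equation. Finally, your Step 2 as written is restricted to $d\ge3$; for $d=2$ the exponents $\tfrac{2d}{d\pm2}$ degenerate and one must take an arbitrary finite $p'$, which is also where the borderline value $2-\tfrac2d=1$ for $d=2$ comes from.
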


\section{Statement of the main results.}

\noindent We state the main results in the theorems below.
\begin{thm}\label{thm:existenceofmeasurevaluedsolutions}
    Suppose initial conditions $(\rho_0, u_0)\in L^\infty(\T^d)$ and one of the conditions below holds
    \begin{itemize}
        \item $\frac{2}{K} > b > 0$, $\gamma > 2 - \frac{2}{d}$, $c\geq 0$, $a, \varepsilon >0$ (here, $K$ is from Lemma \ref{lem:boundfornegativebees}),
        \item or $b \leq 0$, $\gamma > 1$, $c\geq 0$, $a, \varepsilon >0$.
    \end{itemize}
Then, the dissipative measure-valued solution to \eqref{eq:EKP} as in Definition \ref{def:measurevaluedsolution} exists.
\end{thm}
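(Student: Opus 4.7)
The plan is to construct the dissipative measure-valued solution via a viscous regularization, and then to extract Young measures from the approximate sequence as the regularization parameter is sent to zero. First, I would replace the Euler-type momentum balance in \eqref{eq:EKP} by a Navier--Stokes--Korteweg--Poisson approximation: add an artificial viscosity term $-\eta\Delta_x u^\eta$ and (if needed for Feireisl-type existence) an auxiliary pressure $\eta\rho^\beta$ with $\beta$ large, starting from mollified initial data. For fixed $\eta>0$ the existence of weak solutions $(\rho^\eta,u^\eta,\Phi^\eta)$ with $\sqrt{\eta}\,\nabla_x u^\eta\in L^2((0,T);L^2(\T^d))$ can be supplied by standard Faedo--Galerkin or Feireisl--Novotn\'y--Petzeltov\'a arguments, adapted to accommodate the Korteweg and Poisson terms.

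The key technical step is to derive bounds on $(\rho^\eta,u^\eta,\nabla_x\Phi^\eta,\nabla_x\rho^\eta)$ that are uniform in $\eta$. Testing the momentum equation with $u^\eta$ and combining with the continuity and Poisson equations yields an energy identity, and in the repulsive regime $b\leq 0$ all contributions in the energy are non-negative, so no additional work is required. In the attractive case $b>0$, however, the term $-\tfrac{b}{2}\int|\nabla_x\Phi^\eta|^2\diff x$ is negative and must be absorbed into the pressure energy. Reasoning in the spirit of Lemma \ref{lem:boundfornegativebees} (with the reference state $M_\rho$), elliptic regularity for the Poisson equation combined with Sobolev embedding, which is available because $\gamma>2-2/d$, produces an estimate of the form
\begin{equation*}
\int_{\T^d}|\nabla_x\Phi^\eta|^2\diff x \leq K\cdot\frac{a}{\gamma-1}\int_{\T^d}(\rho^\eta)^\gamma\diff x + C,
\end{equation*}
and the smallness assumption $bK<2$ allows the bad term to be absorbed into the pressure part of the energy. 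In particular, $\rho^\eta\in L^\infty_tL^\gamma_x$, $\sqrt{\rho^\eta}u^\eta\in L^\infty_tL^2_x$, $\nabla_x\Phi^\eta\in L^\infty_tL^2_x$, and, when $c>0$, $\nabla_x\rho^\eta\in L^\infty_tL^2_x$, all uniformly in $\eta$.

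With these bounds in hand, the Fundamental Theorem of Young Measures (Theorem \ref{app:fundamental_thm}) applied to the tuple $z^\eta:=(\rho^\eta,u^\eta,\nabla_x\Phi^\eta,\nabla_x\rho^\eta)$ produces, along a subsequence, a Young measure $\nu_{t,x}$ with the integrability demanded by Definition \ref{def:measurevaluedsolution}. The concentration measures $m^{\rho u},\,m^{\rho u\otimes u},\,m^{\rho^\gamma},\,m^{|\nabla\Phi|^2},\,m^{|\nabla\rho|^2},\,m^{\rho\nabla\rho}$ and $m^{\nabla\rho\otimes\nabla\rho}$ are supplied by Lemma \ref{app:concentration}, while Proposition \ref{app:concetrationmeasuresbounding} provides the cross comparisons that place them in the regularity classes required by Definition \ref{def:measurevaluedsolution}. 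Testing the approximate system against smooth $\psi,\varphi,\chi$ and letting $\eta\to 0^+$, the linear terms pass by weak-$\ast$ convergence while every nonlinear term is identified through \eqref{app:weakconv_car_lp} as a Young-measure barycenter plus the corresponding concentration defect. The Poisson equation passes thanks to strong compactness of $\Phi^\eta$ in $L^2((0,T);W^{1,2}(\T^d))$, which gives both \eqref{eq:poissonmvs} and the identification \eqref{eq:nablaPhiidentity}; the artificial viscosity term vanishes because its prefactor tends to zero.

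Finally, the energy inequality \eqref{eq:energyinequalitymvs} is obtained by weak-$\ast$ lower semicontinuity of the convex energy functional, and the concentration $m^{\rho|u|^2}$ absorbs any deficit generated by the lack of strong compactness of $\sqrt{\rho^\eta}u^\eta$. The main obstacle throughout is the attractive case $b>0$: without both the smallness condition $b<2/K$ and the integrability improvement $\gamma>2-2/d$, the negative contribution of the Poisson potential to the energy cannot be absorbed, and the approximate sequence could blow up in finite time. The positive term $\tfrac{c}{2}|\nabla\rho|^2$ present whenever $c>0$ is exactly what supplies the Sobolev regularity of $\overline{\rho}$ demanded by \eqref{def:sobolevregularitydensity}.
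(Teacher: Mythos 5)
Your second stage (uniform energy bounds with the two sign cases for $b$, absorption of $-\tfrac{b}{2}\int|\nabla_x\Phi|^2\diff x$ via a Lemma \ref{lem:boundfornegativebees}--type estimate under $bK<2$ and $\gamma>2-\tfrac{2}{d}$, then Young measures from Theorem \ref{app:fundamental_thm}, concentration defects from Lemma \ref{app:concentration} and Proposition \ref{app:concetrationmeasuresbounding}, and the energy inequality with the defect $m^{\rho|u|^2}$) is essentially the paper's argument. The genuine gap is in your first stage: you take for granted that, for fixed $\eta>0$, weak solutions of the Navier--Stokes--Korteweg--Poisson approximation exist by ``standard Faedo--Galerkin or Feireisl--Novotn\'y--Petzeltov\'a arguments, adapted to accommodate the Korteweg and Poisson terms.'' This adaptation is not standard and, for constant viscosity and capillarity $c>0$, is essentially open: the energy only controls $\nabla_x\rho^\eta$ in $L^\infty_t L^2_x$, while identifying the Korteweg stress $\tfrac12|\nabla_x\rho|^2\mathbb{I}_d-\nabla_x\rho\otimes\nabla_x\rho$ (and $\rho\nabla_x\rho$) in the Galerkin-to-weak-solution passage at fixed $\eta$ requires strong compactness of $\nabla_x\rho$, which a parabolic regularization $-\eta\Delta_x u$ of the velocity does not provide, since the continuity equation gives no smoothing of the density. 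Known global existence results for Navier--Stokes--Korteweg need density-dependent (Bresch--Desjardins) viscosities or extra drag/cold-pressure terms, so your fixed-$\eta$ existence step cannot be cited as routine; moreover the artificial pressure $\eta\rho^\beta$ would add further terms to be removed. The low range $\gamma>1$ in the case $b\le 0$ also falls outside the $\gamma>d/2$ regime of the Feireisl theory you invoke.

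The paper circumvents exactly this difficulty by choosing a much stronger regularization: a Galerkin scheme in an orthogonal basis of $W^{6,2}_0(\T^d)$ with the penalization $\mu\,((u;\phi))$ in the momentum equation. This keeps $u^n\in C^1([0,T);W^{6,2}_0)\hookrightarrow C^1_t C^4_x$, so the continuity equation is solved explicitly by characteristics (formula \eqref{formuladensitygalerkin}) with strictly positive, $L^\infty$-bounded densities (initial data $\rho^\mu_0=\rho_{0,\mu}+\mu$), and all Korteweg and Poisson terms are classical at the approximate level, with bounds of order $C(\mu^{-1})$ good enough for Aubin--Lions compactness in $n$. Only the energy inequality \eqref{energyepsilon} is uniform in $\mu$, and that is all one needs for the measure-valued limit, while the penalization $\mu((u^\mu;\phi))$ vanishes since $\sqrt{\mu}\,u^\mu$ is bounded in $L^2_t W^{6,2}_x$. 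If you want to keep your viscosity-based route, you would have to regularize the density equation as well (and then track the resulting extra terms in the energy), or strengthen the velocity regularization as the paper does; as written, the construction of the approximate solutions does not go through.
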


\begin{thm}\label{theo:poissonlimit}
	Let $r_0\in C^3(\T^d)$ be initial data giving rise to a strong solution $(r,\Phi)$ as in Definition \ref{def:strongsolutions} of (\ref{eq:CHKS}) satisfying $r\geq \rho_*$ on $[0,T]\times \T^d$ for some $\rho_*>0$, and for all $\eps>0$ let $(\nu^{\eps}_0,m^{\eps}_0)$ be well-prepared measure-valued initial data in the sense that
	\begin{align*}
		\mathcal{E}^{\eps}_{\operatorname{rel}}(0)\rightarrow 0.
	\end{align*}
	Suppose that $\frac{2}{K} > b$, $\gamma \geq 2$, $c\geq 0$, $a, \varepsilon >0$ (here, $K$ is from the Lemma \ref{lem:boundfornegativebees}). Then, for all $\tau\in [0,T]$ and for any high friction sequence of dissipative measure-valued solutions $(\nu^{\eps},m^{\eps})$ of (\ref{eq:EKP}) with initial data $(\rho^{\eps}_0,u^{\eps}_0)$ it holds that
	\begin{align}
		\mathcal{E}^{\eps}_{\operatorname{rel}}(\tau)+\frac{1}{2\eps^2}\int_{0}^{\tau}\int_{\T^d}^{}\overline{\rho^{\eps}|u^{\eps}-U^{\eps}|^2}\diff x\diff t\rightarrow 0\text{ as }\eps\rightarrow 0.\label{eq:relativeentropyconvergence}
	\end{align}
	Here, 
	the relative entropy
	\begin{align*}
		\mathcal{E}^{\eps}_{\operatorname{rel}}(\tau):=\int_{\T^d}^{}\frac{1}{2}\overline{\rho^{\eps}|u^{\eps}-U^{\eps}|^2}+\overline{h(\rho^{\eps}|r)}-\frac{b}{2}\overline{|\nabla_x\Phi_{\rho^\eps}-\nabla_x\Phi_r|^2}+\frac{c}{2}\overline{|\nabla_x\rho^\eps-\nabla_x r|^2}\diff x,
	\end{align*}
	is considered, where
	\begin{align}\label{definitionofbigueps}
		 U^{\eps}:=-\eps\nabla_x(h'(r)-b\Phi_r-c\Delta_x r), 
	\end{align}
	and
	$$
	h(r), \qquad h(\rho^\eps|r) \qquad \text{ are defined by }\eqref{lem:definitionofh}.
	$$
	As a consequence of (\ref{eq:relativeentropyconvergence}), we obtain that up to a subsequence
	\begin{align*}
		m^{|\nabla_x\Phi_{\rho^\eps}|^2}\overset{*}{\rightharpoonup}&\,0\text{ weakly$-\ast$ in }L^{\infty}\big((0,T);\mathcal{M}(\T^d)\big),\\
		m^{\nabla_x\Phi_{\rho^\eps}\otimes \nabla_x\Phi_{\rho^\eps}}\overset{*}{\rightharpoonup}&\,0\text{ weakly$-\ast$ in }L^{\infty}\big((0,T);\mathcal{M}(\T^d)\big),\\
		m^{|\nabla_x\rho^\eps|^2}\overset{*}{\rightharpoonup}&\,0\text{ weakly$-\ast$ in }L^{\infty}\big((0,T);\mathcal{M}(\T^d)\big),\\
		m^{\nabla_x\rho^\eps\otimes \nabla_x\rho^\eps}\overset{*}{\rightharpoonup}&\,0\text{ weakly$-\ast$ in }L^{\infty}\big((0,T);\mathcal{M}(\T^d)\big),
	\end{align*}
	and
	\begin{align*}
		\overline{\rho^{\eps}}=\langle\nu^{\eps},\rho\rangle\overset{*}{\rightharpoonup}&\,r\text{ weakly$-\ast$ in }L^{\infty}\big((0,T); L^{\gamma}(\T^d)\big),\\
		b\overline{\nabla_x\Phi_{\rho^\eps}}=b\langle\nu^{\eps},F\rangle\overset{*}{\rightharpoonup}&\,b\,\nabla_x\Phi_r\text{ weakly$-\ast$ in }L^{\infty}\big((0,T);L^2(\T^d)^d\big),\\
		c\overline{\nabla_x\rho^\eps}=c\langle\nu^{\eps},G\rangle\overset{*}{\rightharpoonup}&\,c\,\nabla_x r\text{ weakly$-\ast$ in }L^{\infty}\big((0,T);L^2(\T^d)^d\big).
	\end{align*}
\end{thm}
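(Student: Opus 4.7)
The plan is to apply the relative entropy method. I would first expand the relative entropy using
$$
\tfrac{1}{2}|u - U^\eps|^2 = \tfrac{1}{2}|u|^2 - u\cdot U^\eps + \tfrac{1}{2}|U^\eps|^2,
$$
a similar identity for the $|\nabla_x\Phi_{\rho^\eps} - \nabla_x\Phi_r|^2$ and $|\nabla_x\rho^\eps - \nabla_x r|^2$ terms, together with the definition $h(\overline{\rho^\eps}|r) = h(\overline{\rho^\eps}) - h(r) - h'(r)(\overline{\rho^\eps}-r)$, to split $\mathcal{E}^\eps_{\operatorname{rel}}(\tau)$ into the natural energy of the measure-valued solution, a collection of cross terms coupling $(\overline{\rho^\eps}, \overline{\rho^\eps u^\eps}, \overline{\nabla_x\Phi_{\rho^\eps}}, \overline{\nabla_x\rho^\eps})$ with $(r, U^\eps, \nabla_x\Phi_r, \nabla_x r)$, and purely $r$-dependent terms.

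The measure-valued energy contribution is controlled directly by the energy inequality \eqref{eq:energyinequalitymvs}. For the cross terms I would use the mass equation \eqref{eq:massmvs} tested against $\psi = \tfrac{1}{2}|U^\eps|^2 - h'(r) + b\Phi_r - c\Delta_x r$, and the momentum equation \eqref{eq:momentummvs} tested against $\varphi = U^\eps$; the regularity of these test functions is precisely the one required by Definition \ref{def:strongsolutions}. The choice \eqref{definitionofbigueps} of $U^\eps$ is reverse-engineered so that $\partial_t r + \tfrac{1}{\eps}\operatorname{div}_x(rU^\eps) = 0$ holds classically, matching \eqref{eq:CHKS}. The purely $r$-dependent part is handled by direct time differentiation using \eqref{eq:CHKS} and the algebraic identities \eqref{identityforPhi}--\eqref{identityforgradlaplace}. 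All concentration contributions that pair with the smooth quantities $U^\eps$, $\Phi_r$, or $\nabla_x r$ are bounded by Proposition \ref{app:concetrationmeasuresbounding}.

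The main algebraic obstacle is matching the dissipation. The friction term $\tfrac{1}{\eps^2}\int\overline{\rho^\eps|u^\eps|^2}$ from the energy inequality must combine with the cross contribution $-\tfrac{2}{\eps^2}\overline{\rho^\eps u^\eps}\cdot U^\eps$ produced by testing the momentum equation against $U^\eps$, and with $\tfrac{1}{\eps^2}\overline{\rho^\eps}|U^\eps|^2$ produced from the mass equation, to give exactly $\tfrac{1}{\eps^2}\overline{\rho^\eps|u^\eps-U^\eps|^2}$ via completing the square. The remaining convective, pressure, Poisson, and Korteweg cross fluxes should reorganize, after using the strong equations for $r$, into (i) quadratic quantities bounded by $\mathcal{E}^\eps_{\operatorname{rel}}$, providing the Gronwall input, and (ii) explicit $O(\eps)$ terms arising from the factor $\eps$ hidden in $U^\eps$ and $\partial_t U^\eps$.

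Two subtleties deserve attention. The term $-\tfrac{b}{2}\overline{|\nabla_x\Phi_{\rho^\eps} - \nabla_x\Phi_r|^2}$ carries the wrong sign when $b>0$; this is absorbed by Lemma \ref{lem:boundfornegativebees} under the assumption $b<2/K$, so that the combined pressure-plus-potential contribution still controls $\int_{\T^d} h(\overline{\rho^\eps}|r)\,\diff x$. The restriction $\gamma\geq 2$ ensures that $h(\overline{\rho^\eps}|r)$ dominates $(\overline{\rho^\eps}-r)^2$ globally, so that quadratic remainders produced by Taylor-expanding the pressure and the nonlocal nonlinearities are absorbed by the relative entropy without additional smallness. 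After these absorptions one arrives at
$$
\mathcal{E}^\eps_{\operatorname{rel}}(\tau) + \tfrac{1}{2\eps^2}\int_0^\tau\int_{\T^d}\overline{\rho^\eps|u^\eps-U^\eps|^2}\,\diff x\,\diff t \leq \mathcal{E}^\eps_{\operatorname{rel}}(0) + C\int_0^\tau \mathcal{E}^\eps_{\operatorname{rel}}(s)\,\diff s + o(1),
$$
and Gronwall's inequality with the well-preparedness of initial data yields \eqref{eq:relativeentropyconvergence}. The announced weak-$\ast$ convergences of the concentration measures follow because they are dominated, via Proposition \ref{app:concetrationmeasuresbounding}, by $\overline{|\nabla_x\Phi_{\rho^\eps}-\nabla_x\Phi_r|^2}$ and $\overline{|\nabla_x\rho^\eps-\nabla_x r|^2}$ up to $L^1$-bounded remainders; the convergences $\overline{\rho^\eps}\wstar r$, $b\overline{\nabla_x\Phi_{\rho^\eps}}\wstar b\nabla_x\Phi_r$, and $c\overline{\nabla_x\rho^\eps}\wstar c\nabla_x r$ follow from $r\geq\rho_*>0$, the strong convexity of $h$ near $r$, and the identification \eqref{eq:nablaPhiidentity}.
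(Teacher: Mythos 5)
Your proposal is correct and takes essentially the same route as the paper: a relative-entropy computation in which the mass equation is tested against the chemical potential $h'(r)-b\Phi_r-c\Delta_x r$ combined with $\tfrac12|U^\eps|^2$ and the momentum equation against $U^\eps$, the friction dissipation is recovered by completing the square, Lemma \ref{lem:boundfornegativebees} with $b<2/K$ absorbs the negative Poisson term, $\gamma\geq 2$ gives the quadratic lower bound on $h(\cdot|r)$ via Lemma \ref{ineq:inequalityforhrhor}, the error term $e=O(\eps)$ produces the vanishing remainder, and Gr\"onwall plus well-preparedness concludes, with the stated convergences read off from the relative entropy and Proposition \ref{app:concetrationmeasuresbounding}. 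Only a cosmetic slip: your mass-equation test function should be (up to overall sign) $\theta(t)\bigl(h'(r)-b\Phi_r-c\Delta_x r-\tfrac12|U^\eps|^2\bigr)$, whereas as written the sign of the $c\Delta_x r$ term is inconsistent with the other terms.
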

\noindent as $\varepsilon\to 0^+$. 
\begin{rem}
The convergence of the density $\rho$ can be better characterized. Indeed, one can show that 
\begin{align}\label{convergencerhoinwasserstein}
\nu^\varepsilon(\diff s) \rightarrow \delta_{r}\quad \text{ in } L^\infty((0, T); L^{\mathrm{min\{\gamma, 2\}}}(\T^d; (\mathcal{P}([0, +\infty)), W_{\min\{\gamma , 2\}}))).
\end{align}
Moreover, if we consider a Young measure generated by the momentum $m = \rho u$ (meaning that $\overline{m} = \langle\nu^\varepsilon, w\rangle + m^{\rho u}$), then
\begin{align}\label{convergencemomentuminwasserstein}
\nu^\varepsilon(\diff w) \rightarrow \delta_{0}\quad \text{ in } L^\infty((0, T); L^1(\T^d; (\mathcal{P}(\R^d), W_1))),
\end{align}
where $(\mathcal{P}([0, +\infty)), W_{\min\{\gamma , 2\}})$ and $(\mathcal{P}(\R^d), W_1)$ denote the spaces of probability measures with Wasserstein distance. We recall the following property of function $h$ and its relative counterpart.
\begin{lem}\label{ineq:inequalityforhrhor}\textup{\textbf{(Lemma 3.4 and Remark 3.5, \cite{lattanzio2017fromgas})}}
Let $h(\rho) = \frac{1}{\gamma - 1}\rho^\gamma$ with $\gamma > 1$. Suppose, that $r\in [a, b]$ with $a > 0$ and $b < +\infty$, then there exists $R_0$ dependent on the interval $[a, b]$ and constants $C_1$ and $C_2$ (dependent on $R_0$ and $[a, b]$) such that
\begin{align}
    h(\rho | r) \geq \left\{\begin{array}{ll}
       C_1|\rho - r|^\gamma , \quad &\text{ if } \rho > R_0, r\in [a, b] \\
       C_2|\rho - r|^2 ,\quad  &\text{ if } 0<\rho\leq R_0, r\in [a, b] 
    \end{array}
    \right.
\end{align}
Moreover if $\gamma \geq 2$, then there exists a constant $C_3$ such that for any $\rho\in\R_+$ and $r\in [a,b]$
\begin{align}\label{ineq:wassersteinsquare}
    h(\rho|r) \geq C_3 |\rho - r|^2.
\end{align}
\end{lem}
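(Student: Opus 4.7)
The plan is to exploit the strict convexity of $h$ via the integral form of Taylor's remainder:
\[
h(\rho|r) = (\rho - r)^2 \int_0^1 (1 - t)\, h''(r + t(\rho - r)) \diff t,
\]
where $h''(\xi) = \gamma \xi^{\gamma - 2} > 0$ for $\xi > 0$; this representation makes clear that $h(\rho|r) \geq 0$ with equality iff $\rho = r$. I would then fix a threshold $R_0$ (depending only on $[a, b]$ and $\gamma$) and analyse separately the \emph{near regime} $0 < \rho \leq R_0$ and the \emph{far regime} $\rho > R_0$.

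For the near regime I would work with the function $F(\rho, r) := h(\rho|r)/(\rho - r)^2$ on $[0, R_0] \times [a, b]$, which extends continuously across the diagonal by $F(r, r) := h''(r)/2 = (\gamma/2) r^{\gamma - 2}$ and at $\rho = 0$ by $F(0, r) = r^{\gamma - 2}$. Since $r \in [a, b]$ with $a > 0$ forces $h''(\xi) \geq \gamma \min(a^{\gamma - 2}, b^{\gamma - 2}) > 0$ on the segment joining $\rho$ and $r$ whenever $\rho, r$ both lie in $[a,b]$, and the integral representation gives strict positivity off the diagonal in general, $F$ is strictly positive on the compact set $[0, R_0] \times [a, b]$; compactness then yields a uniform lower bound $C_2 := \inf F > 0$.

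For the far regime a direct computation gives
\[
\frac{h(\rho|r)}{|\rho - r|^\gamma} = \frac{1}{(\gamma - 1)|\rho - r|^\gamma}\bigl( \rho^\gamma - r^\gamma - \gamma r^{\gamma - 1}(\rho - r)\bigr),
\]
and as $\rho \to \infty$ this ratio converges to $1/(\gamma - 1)$ uniformly over $r \in [a, b]$ (the subtracted lower-order terms in $r$ are bounded while $|\rho-r|^\gamma \sim \rho^\gamma$). Choosing $R_0$ sufficiently large (depending on $[a, b]$ and $\gamma$) therefore guarantees that the ratio is bounded below by some $C_1 \in (0, 1/(\gamma - 1))$ for all $\rho > R_0$ and $r \in [a, b]$.

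For the $\gamma \geq 2$ statement the two bounds combine: on $\{\rho \leq R_0\}$ one has $h(\rho|r) \geq C_2 |\rho - r|^2$ directly, while on $\{\rho > R_0\}$, choosing $R_0 > b$ at the outset gives $|\rho - r| \geq R_0 - b > 0$, so $|\rho - r|^{\gamma - 2} \geq (R_0 - b)^{\gamma - 2}$ (using $\gamma - 2 \geq 0$), and therefore
\[
h(\rho|r) \geq C_1 |\rho - r|^\gamma \geq C_1 (R_0 - b)^{\gamma - 2} |\rho - r|^2.
\]
Setting $C_3 := \min\bigl(C_2,\, C_1 (R_0 - b)^{\gamma - 2}\bigr)$ completes the argument. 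The only real subtlety is ensuring that every constant is genuinely uniform in $r \in [a, b]$, which is handled throughout by compactness of the interval; beyond that I do not expect a serious obstacle, since the whole statement is a careful strict-convexity estimate with correct asymptotic accounting near $\rho = \infty$ and near the origin.
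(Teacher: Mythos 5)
Your proof is correct: the integral-remainder representation $h(\rho|r)=(\rho-r)^2\int_0^1(1-t)h''(r+t(\rho-r))\diff t$, the compactness argument for the extended quotient $F$ on $[0,R_0]\times[a,b]$ (including the values at $\rho=0$ and on the diagonal), the uniform asymptotics of $h(\rho|r)/|\rho-r|^\gamma\to 1/(\gamma-1)$ as $\rho\to\infty$, and the patching via $|\rho-r|\geq R_0-b$ when $\gamma\geq 2$ all check out. Note that the paper itself gives no proof of this statement — it is quoted from Lemma 3.4 and Remark 3.5 of \cite{lattanzio2017fromgas} — and your argument is essentially the standard convexity/Taylor-remainder reasoning used there, so there is nothing further to reconcile.
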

\begin{cor}
Under the assumption of Lemma \ref{ineq:inequalityforhrhor}, for any $\delta > 0$, there exists a constant $C(\delta) > 0$, such that
\begin{align}\label{ineq:wassersteindistancecruacialineq}
    |\rho - r|^\gamma \leq \delta + C(\delta)h(\rho|r).
\end{align}
\end{cor}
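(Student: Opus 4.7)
The plan is to split the pointwise inequality into two regimes according to the size of $|\rho - r|$. Fix $\delta > 0$ and set the threshold $\eta := \delta^{1/\gamma}$. In the regime $|\rho - r| < \eta$, we trivially have $|\rho - r|^\gamma < \eta^\gamma = \delta$, so the contribution from this region is absorbed into the additive constant $\delta$ in the claimed inequality, regardless of the value of $h(\rho|r)$.

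In the complementary regime $|\rho - r| \geq \eta$ I would invoke Lemma \ref{ineq:inequalityforhrhor} and distinguish the two subcases provided there. If $\rho > R_0$, the lemma gives $h(\rho|r) \geq C_1 |\rho - r|^\gamma$ directly, hence $|\rho - r|^\gamma \leq C_1^{-1} h(\rho|r)$. If instead $0 < \rho \leq R_0$ and $r \in [a,b]$, then $|\rho - r|$ is bounded above by $M := R_0 + b$, while the quadratic bound $h(\rho|r) \geq C_2 |\rho - r|^2$ is the only one at our disposal. We write $|\rho - r|^\gamma = |\rho - r|^{\gamma-2}\,|\rho - r|^2$ and estimate the prefactor $|\rho - r|^{\gamma-2}$ uniformly: by $M^{\gamma-2}$ if $\gamma \geq 2$ (monotonicity in the positive exponent) and by $\eta^{\gamma-2} = \delta^{(\gamma-2)/\gamma}$ if $1 < \gamma < 2$ (the function $t \mapsto t^{\gamma-2}$ is then decreasing, so the threshold $|\rho - r| \geq \eta$ controls it). In either case one obtains $|\rho - r|^\gamma \leq K(\delta) C_2^{-1} h(\rho|r)$ for an explicit $K(\delta)$.

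Combining the two regimes and taking $C(\delta)$ to be the maximum of $C_1^{-1}$ and $K(\delta) C_2^{-1}$ yields the desired pointwise inequality. The only subtle point is the range $1 < \gamma < 2$, where the quadratic estimate from Lemma \ref{ineq:inequalityforhrhor} is weaker than the target $\gamma$-power bound; the choice $\eta = \delta^{1/\gamma}$ is designed precisely so that the negative power $|\rho - r|^{\gamma-2}$ can be controlled uniformly on the set where the second subcase applies, at the (expected and unavoidable) price of a constant $C(\delta)$ that blows up as $\delta \to 0^+$.
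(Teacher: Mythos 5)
Your argument is correct: the paper states this corollary without proof, and your derivation—splitting at the threshold $|\rho-r|\geq \delta^{1/\gamma}$ and then invoking the two regimes of Lemma \ref{ineq:inequalityforhrhor}, with the factor $|\rho-r|^{\gamma-2}$ controlled by the threshold when $1<\gamma<2$ and by the a priori bound $|\rho-r|\leq R_0+b$ when $\gamma\geq 2$—is exactly the standard intended argument. No gaps; the $\delta$-dependence of $C(\delta)$ you obtain is precisely what the statement allows.
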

\noindent Using \eqref{ineq:wassersteinsquare}, \eqref{ineq:wassersteindistancecruacialineq} and Theorem \ref{theo:poissonlimit} one may notice
\begin{equation}\label{ineq:wassersteinconvergencerho}
    \begin{split}
        0\leq \int_{\T^d}\int_0^{\infty}|s - r|^{\min\{\gamma, 2\}}\nu^\varepsilon(\diff s)\diff x \leq |\T^d|\delta + C(\delta)\int_{\T^d}\overline{h(\rho^\varepsilon|r)}\diff x.
    \end{split}
\end{equation}
Converging first with $\varepsilon\to 0^+$ and then with $\delta\to 0^+$, gives the \eqref{convergencerhoinwasserstein}. Now notice that from \eqref{ineq:wassersteinconvergencerho} we can deduce
$$
\int_{\T^d}\int_0^\infty s\nu^\varepsilon(\diff s)\diff x \leq C,
$$
where $C$ is independent of $\varepsilon$. Hence, we use Young's inequality and get
\begin{equation}\label{inequalitywassersteinmomentum}
    \begin{split}
        0&\leq \int_{\T^d}\int_{\R^d}|w|\nu^\varepsilon(\diff w)\diff x\leq \delta\int_{\T^d}\int_0^\infty s\,\nu^\varepsilon(\diff s)\diff x + \frac{1}{4\delta}\int_{\T^d}\int_{0}^\infty\int_{\R^d}\frac{|w|^2}{s}\nu^\varepsilon(\diff s, \diff w)\diff x\\
        &\leq \delta\, C + \frac{1}{2\delta}\left(\int_{\T^d}\overline{\rho^\varepsilon\left|\frac{m^\varepsilon}{\rho^\varepsilon} - U^\varepsilon\right|^2}\diff x + \int_{\T^d}\overline{\rho^\varepsilon}|U^\varepsilon|^2\diff x\right).
    \end{split}
\end{equation}
From the definition of $U^\varepsilon$ (see \eqref{definitionofbigueps})
\begin{align*}
    \int_{\T^d}\overline{\rho^\varepsilon}|U^\varepsilon|^2\diff x \lesssim \varepsilon^2 \int_{\T^d}\overline{\rho^\varepsilon}\diff x\lesssim \varepsilon^2,
\end{align*}
and from Theorem \ref{theo:poissonlimit}
\begin{align*}
    \int_{\T^d}\overline{\rho^\varepsilon\left|\frac{m^\varepsilon}{\rho^\varepsilon} - U^\varepsilon\right|^2}\diff x \longrightarrow 0.
\end{align*}
Thus, converging with $\varepsilon\to 0^+$ and then with $\delta\to 0^+$ in \eqref{inequalitywassersteinmomentum} gives the \eqref{convergencemomentuminwasserstein}.
\end{rem}
\section{Existence of dissipative measure-valued solutions to EKP system.}\label{section:proofofexistence}
\noindent Here, we state the proof of the Theorem \ref{thm:existenceofmeasurevaluedsolutions}. One may notice that the beginning of the proof is indifferent to the choice of attractive or repulsive potential. This distinction becomes important in the proof of the Lemma \ref{lem:compactnessforgalerkin} and later on, while converging from approximate solutions (defined below) to the dissipative measure-valued ones. A similar argument has been made in \cite{carrillo2021dissipative}. For now, we omit the index $\varepsilon$.
    \subsection{Approximate solutions}Fix $\mu > 0$ and let $((\cdot ; \cdot)) = (\cdot; \cdot)_{W^{6, 2}_0(\T^d)}$ denote the standard scalar product in $W^{6, 2}_0(\T^d)$. We will first seek the approximate solution to our problem, i.e.  the triple $(u^\mu, \rho^\mu, \Phi_{\rho^\mu})$ such that
    \begin{align}
        \int_0^\tau\int_{\T^d} \rho^\mu\p_t\psi(t, x) + \frac{1}{\eps}\rho^\mu u^\mu\,\nabla_x\psi(t, x)\diff x\diff t = \int_{\T^d}\rho^\mu(\tau, x)\psi(\tau, x) - \rho_0^\mu(x)\psi(0, x)\diff x \label{approximateEKP1}
    \end{align}
    for any $\psi\in C^1([0, T]\times \T^d)$,
    \begin{equation}
    \begin{aligned}
        &\int_0^\tau\int_{\T^d}\rho^\mu u^\mu\p_t\phi + \frac{1}{\eps}\rho^\mu u^\mu\otimes u^\mu : \nabla_x\phi - \frac{1}{\eps^2}\rho^\mu\,u^\mu\,\phi\diff x\diff t+ \frac{a}{\eps}\int_0^\tau\int_{\T^d}(\rho^\mu)^\gamma \divergence_x\phi\diff x\diff t\\
        &+\int_0^\tau\int_{\T^d}\left[\frac{b}{\eps}\nabla_x\Phi_{\rho^\mu}\otimes \nabla_x\Phi_{\rho^\mu} + \frac{b}{2\eps}|\nabla_x\Phi_{\rho^\mu}|^2\mathbb{I}_d\right] : \nabla_x\phi\diff x\diff t- \frac{b}{\eps}M_{\rho^\mu}\int_0^\tau\int_{\T^d}\nabla_x\Phi_{\rho^\mu}\,\phi\diff x\diff t\\
        & + \int_0^\tau\int_{\T^d}\left[\frac{c}{2\eps}|\nabla_x\rho^\mu|^2\mathbb{I}_d + \frac{c}{\eps}\nabla_x\rho^\mu\otimes\nabla_x\rho^\mu \right] : \nabla_x\phi\diff x\diff t + \frac{c}{\eps}\int_0^\tau\int_{\T^d}\rho^\mu\nabla_x\rho^\mu\cdot\nabla_x(\divergence_x\phi)\diff x\diff t\\
        &= \mu\int_0^\tau ((u^\mu; \phi))\diff t + \int_{\T^d}(\rho^\mu u^\mu)(\tau, x)\phi(\tau, x) - \int_{\T^d}\rho^\mu_0 u^\mu_0\phi(0, x)\diff x
    \end{aligned}
    \end{equation}
    for every $\phi\in C^2([0, T]\times\T^d)$ and
    \begin{align}
        \int_{\T^d}\nabla_x\Phi_{\rho^\mu}(\tau, x)\nabla_x\chi(x)\diff x = \int_{\T^d}\rho^\mu(\tau, x)\chi(x)\diff x - M_{\rho^\mu}\int_{\T^d}\chi(x)\diff x \label{approximateEKP3}
    \end{align}
    for any $\chi \in C^1(\T^d)$. Here 
    $$
    M_{\rho^\mu} = \underset{\T^d}{\dashint}\rho^\mu(t,x)\diff x,
    $$
    which is a constant in time for each $\mu > 0$. Initial conditions $\rho^\mu_0$ and $u^\mu_0$ are defined by $\rho^\mu_0 = \rho_{0, \mu} + \mu$ and $u^\mu_0 = u_{0, \mu}$, where $\rho_{0,\mu}$ and $u_{0, \mu}$ denote standard mollifications of $\rho_0$ and $u_0$ respectively.\\
    \\
    \noindent To this end, we will look at the Galerkin approximations. For the sake of convenience and avoidance of the overload of indexes, we drop the index $\mu$. Let $\{\omega_i\}_{i\in\mathbb{N}}$ be the sequence of smooth functions, which is the orthogonal basis of $W^{6, 2}_0(\T^d)$ and orthonormal one of $L^2(\T^d)$ (for the existence of such a basis see Theorem 4.11, \cite{malek1996weak}, p. 290). Let
    $$
    u^n(t, x) = \sum_{i = 1}^n c_i^n(t)\omega_i(x).
    $$
    We say that the triple $(u^n, \rho^n, \Phi_{\rho^n})$ is the Galerkin approximation of the problem \eqref{approximateEKP1} - \eqref{approximateEKP3} if it solves for all $i = 1, ..., n$ 
    \begin{align}
        \p_t\rho^n + \frac{1}{\eps}\divergence_x(\rho^n u^n) = 0 \label{galerkingEP1}
    \end{align}
    \begin{equation}
    \begin{aligned}
        &\int_{\T^d}\rho^n \p_t u^n\omega_i + \frac{1}{\eps}\rho^n\nabla_x u^n\cdot u^n\omega_i + \frac{b}{\eps}\divergence_x \left[\nabla_x\Phi_{\rho^n}\otimes \nabla_x\Phi_{\rho^n} - \frac{1}{2}|\nabla_x\Phi_{\rho^n}|^2\mathbb{I}_d\right] \cdot \omega_i\diff x\\
        & + \int_{\T^d}\frac{c}{\eps}\divergence_x\left[\frac{1}{2}|\nabla_x\rho^n|^2\mathbb{I}_d + \nabla_x\rho^n\otimes\nabla_x\rho^n \right] \cdot \omega_i\diff x
         - \frac{c}{\eps}\int_{\T^d}\rho^n\nabla_x\rho^n\cdot\nabla_x(\divergence_x\omega_i)\diff x\\
         &+ \frac{1}{\eps^2}\int_{\T^d}\rho^n u^n \omega_i\diff x + \frac{a}{\eps}\int_{\T^d}\nabla_x((\rho^n)^\gamma) \omega_i\diff x - \frac{b}{\eps}M_{\rho^n}\int_{\T^d}\nabla_x\Phi_{\rho^n}\omega_i\diff x + \mu ((u^n; \omega_i)) = 0 \label{galerkinEP2}
    \end{aligned}
    \end{equation}
    \begin{align}
        -\Delta \Phi_{\rho^n} = \rho^n - M_{\rho^n}\label{galerkinEP3}
    \end{align}
with the initial conditions $\rho^n_0 = \rho_0$ and $u^n_0 = \sum_{i = 1}^n (u_0; \omega_i)_{L^2(\T^d)}\omega_i$. Let us note that the existence of such Galerkin approximations is a standard argument connecting the method of characteristics to solve the continuity equation \eqref{galerkingEP1} for fixed $u^n$ and Schauder's fixed point theorem to obtain the solution. Note that 
\begin{align}
    u^n\in C^1([0, T); W^{6, 2}_0(\T^d)).
\end{align}
In particular the density $\rho^n$ satisfies the equation
\begin{align}\label{formuladensitygalerkin}
    \rho^n(t, x) = \rho_0(X^n(0; t, x))\exp\left(-\frac{1}{\eps}\int_0^t \divergence_x(u^n(\tau, X^n(\tau; t, x)))\diff\tau\right),
\end{align}
where $X^n$ is the flow map defined with
\begin{align}\label{velocityflowsystem}
    \left\{\begin{array}{ll}
    \frac{\p}{\p\tau}X^n(\tau; t, x) = \frac{1}{\eps}u^n(\tau, X^n(\tau; t, x))\\
    X^n(t; t, x) = x.
    \end{array}\right.
\end{align}
Now we want to derive some sort of compactness for the solutions of Galerkin system \eqref{galerkingEP1} - \eqref{galerkinEP3}.
\begin{lem}\label{lem:compactnessforgalerkin}
Let $(\rho^n, u^n, \Phi_{\rho^n})$ be the solution of the Galerkin system \eqref{galerkingEP1} - \eqref{galerkinEP3}. Then 
\begin{enumerate}[label = (B\arabic*)]
    \item $\{\rho^n\}_{n\in\N}$ is bounded in $L^\infty((0, T)\times\T^d)$ \label{galerkincompactness1},
    \item $\{\p_t \rho^n\}_{n\in\N}$ is bounded in $L^2((0, T)\times\T^d)$,
    \item $\{u^n\}_{n\in\N}$ is bounded in $L^\infty((0, T); W^{6, 2}_0(\T^d)))$,
    \item $\{\p_t u^n\}_{n\in\N}$ is bounded in $L^2((0, T)\times\T^d)$,
    \item $\{\nabla_x\Phi_{\rho^n}\}_{n\in\N}$ is bounded in $L^\infty((0, T); L^2(\T^d))$\label{galerkincompactness5}.
\end{enumerate}
In particular, up to the subsequence 
\begin{equation}\label{weakconvergencegalerkin}
    \begin{aligned}
    \rho^n &\overset{*}{\rightharpoonup} \rho \quad\qquad\text{ weakly$-\ast$ in $L^\infty((0, T)\times\T^d)$},\\
    \p_t \rho^n &\rightharpoonup\p_t\rho \qquad\text{ weakly in $L^2((0, T)\times \T^d)$},\\
    \p_t u^n &\rightharpoonup \p_t u \qquad\text{ weakly in $L^2((0, T)\times\T^d)$},\\
    u^n &\overset{*}{\rightharpoonup} u \quad\qquad\text{ weakly$-\ast$ in $L^\infty((0, T); W^{6, 2}_0(\T^d))$},
    \end{aligned}
\end{equation}
furthermore 
\begin{equation}\label{strongconvergencesgalerkin}
    \begin{aligned}
    \rho^n &\rightarrow \rho &\text{ strongly in $L^2((0, T); W^{2, 2}_0(\T^d))$},\\
    u^n &\rightarrow u &\text{ strongly in $L^2((0, T); W^{1,2}_0(\T^d))$},
    \end{aligned}
\end{equation}
and 
\begin{align}\label{potentialconvergencegalerkin}
    \nabla_x\Phi_{\rho^n} \rightarrow \nabla_x\Phi_{\rho} \quad\text{ strongly in $L^2((0, T)\times\T^d)$}.
\end{align}
\end{lem}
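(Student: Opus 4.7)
The plan is to derive the bounds (B1)--(B5) from a Galerkin energy identity and the flow-map representation \eqref{formuladensitygalerkin} of the density, and then to extract the weak and strong convergences via Banach--Alaoglu and the Aubin--Lions lemma. First I multiply the momentum Galerkin equation \eqref{galerkinEP2} by $c_i^n(t)$ and sum over $i$, using \eqref{galerkingEP1} to rewrite the transport contribution as $\tfrac{d}{dt}\int\tfrac12\rho^n|u^n|^2\diff x$, and I integrate the pressure, Korteweg and Poisson terms by parts via the identities \eqref{identityforPhi}, \eqref{identityforgradlaplace} together with \eqref{galerkinEP3}. This produces the energy identity
\begin{equation*}
\frac{d}{dt}\int_{\T^d}\Bigl(\tfrac12\rho^n|u^n|^2+\tfrac{a}{\gamma-1}(\rho^n)^\gamma-\tfrac{b}{2}|\nabla_x\Phi_{\rho^n}|^2+\tfrac{c}{2}|\nabla_x\rho^n|^2\Bigr)\diff x+\frac{1}{\eps^2}\int_{\T^d}\rho^n|u^n|^2\diff x+\mu\|u^n\|_{W^{6,2}}^2=0.
\end{equation*}
When $b\le 0$ every term has the correct sign; when $0<b<2/K$ I absorb the Poisson term into the internal energy via Lemma \ref{lem:boundfornegativebees} applied with the constant-in-$x$ profile $r\equiv M_{\rho^n}$ (so that $\nabla_x\Phi_r\equiv 0$). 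Gronwall then yields uniform $L^\infty_tL^2_x$ bounds on $\sqrt{\rho^n}\,u^n$, $(\rho^n)^{\gamma/2}$ and $\nabla_x\rho^n$, an $L^2_tW^{6,2}_x$ bound on $\sqrt{\mu}\,u^n$, and (B5) directly from \eqref{galerkinEP3}.

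For (B1) I exploit the closed-form expression \eqref{formuladensitygalerkin}: the Sobolev embedding $W^{6,2}(\T^d)\hookrightarrow W^{1,\infty}(\T^d)$ (available for $d\le 3$) combined with the $L^2_tW^{6,2}_x$ bound above gives $\divergence_x u^n\in L^2_tL^\infty_x$, which when inserted into \eqref{formuladensitygalerkin} produces a uniform $L^\infty$ bound on $\rho^n$. Because $\rho_0^\mu\ge\mu$, the same formula delivers a strictly positive lower bound $\rho^n\ge\mu\,e^{-C(\mu,\eps)}$, which is essential in what follows. To upgrade the $L^2_tW^{6,2}_x$ control to the pointwise-in-time bound (B3), I test \eqref{galerkinEP2} against $\p_tc_i^n(t)$ and sum over $i$ to obtain
\begin{equation*}
\int_{\T^d}\rho^n|\p_tu^n|^2\diff x+\frac{\mu}{2}\frac{d}{dt}\|u^n\|_{W^{6,2}}^2=(\text{lower-order terms}),
\end{equation*}
where the right-hand side is controlled in $L^1_t$ using the previous bounds (the density lower bound handles the convective term, while (B1) and (B5) take care of the pressure, Korteweg and Poisson contributions). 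Integrating in time and exploiting the regularity of the mollified initial data $\rho_0^\mu,u_0^\mu$ yields both (B3) and (B4); (B2) then follows immediately from \eqref{galerkingEP1} together with (B1) and (B3).

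Finally, the weak and weak-$\ast$ convergences in \eqref{weakconvergencegalerkin} follow at once from (B1)--(B5) via Banach--Alaoglu, and the strong convergences \eqref{strongconvergencesgalerkin} come from the Aubin--Lions lemma applied with the compact embeddings $W^{6,2}_0(\T^d)\Subset W^{2,2}_0(\T^d)$ and $W^{6,2}_0(\T^d)\Subset W^{1,2}_0(\T^d)$ together with the time-derivative bounds (B2) and (B4). The convergence \eqref{potentialconvergencegalerkin} is then immediate from the linearity of the Poisson equation and strong $L^2$-convergence of $\rho^n$. The step I expect to require the most care is the test against $\p_tu^n$ in the Korteweg sector: it produces a term of the form $\int\nabla_x\rho^n\cdot\nabla_x\p_tu^n\diff x$ whose control requires interpolating between the $L^2_tW^{6,2}_x$ smoothing provided by the $\mu$-regularisation and the strict positivity of $\rho^n$ inherited from $\rho_0^\mu\ge\mu$, which is precisely why the ansatz space $W^{6,2}_0(\T^d)$ with six $L^2$-derivatives has been chosen.
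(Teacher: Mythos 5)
Your overall strategy coincides with the paper's: the same energy identity obtained by testing \eqref{galerkinEP2} with $c_i^n$, the same case distinction in $b$ (the paper likewise absorbs the Poisson term via the inequality $\int_{\T^d}|\nabla_x\Phi_{\rho^n}|^2\diff x\leq K\int_{\T^d}h(\rho^n)\diff x$ obtained as in Lemma \ref{lem:boundfornegativebees}, using $\tfrac{b}{2}<\tfrac{1}{K}$), the same use of the flow formula \eqref{formuladensitygalerkin} for the density, the test with $\p_t c_i^n$ for (B3)--(B4), and Banach--Alaoglu plus Aubin--Lions at the end. However, there is a genuine gap in your treatment of the density. You only extract from \eqref{formuladensitygalerkin} an $L^\infty$ bound and a positive lower bound on $\rho^n$, whereas the strong convergence $\rho^n\to\rho$ in $L^2((0,T);W^{2,2}_0(\T^d))$ requires uniform control of \emph{higher spatial derivatives} of $\rho^n$ (at least up to third order) so that Aubin--Lions can be applied to $\rho^n$ itself with a compact embedding such as $W^{3,2}\Subset W^{2,2}$. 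Your invocation of the embedding $W^{6,2}_0(\T^d)\Subset W^{2,2}_0(\T^d)$ is only available for $u^n$, which is the quantity bounded in $W^{6,2}$; the bounds (B1), (B2) alone (an $L^\infty$ bound and $\p_t\rho^n\in L^2$) cannot yield strong convergence in $W^{2,2}$. The paper closes this by differentiating the formula \eqref{formuladensitygalerkin} in $x$, using the embedding $W^{6,2}_0(\T^d)\hookrightarrow C^4(\T^d)$ (not merely $W^{1,\infty}$, which is all you invoke) together with the smoothness of the mollified initial datum, to obtain the bounds on $\nabla_x\rho^n$, $D^2_x\rho^n$, $D^3_x\rho^n$ in \eqref{galerkinboundsdensity}; these derivative bounds are also what make the Young-inequality estimates of the Korteweg terms work when testing with $\p_t c_i^n$.

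A second, related point: your derivation of (B2) from \eqref{galerkingEP1} needs a bound on $\nabla_x\rho^n$, which you attribute to the energy identity; but the energy only controls $\tfrac{c}{2}|\nabla_x\rho^n|^2$, so in the case $c=0$ (which the lemma covers) that control is absent and must again come from differentiating \eqref{formuladensitygalerkin}. Once the derivative estimates \eqref{galerkinboundsdensity} are in place, the rest of your argument (weak limits via Banach--Alaoglu, Aubin--Lions for $u^n$, strong $L^2$ convergence of $\nabla_x\Phi_{\rho^n}$ from the linear Poisson equation \eqref{galerkinEP3}) goes through as in the paper.
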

\begin{proof}
After multiplying \eqref{galerkinEP2} by $c_i$ and summing over $i$ we obtain the following 
\begin{equation}\label{almostenergygalerkin}
    \begin{aligned}
        &\frac{d}{dt}\int_{\T^d}\frac{1}{2}\rho^n |u^n|^2\diff x + \int_{\T^d}\frac{b}{\eps}\divergence_x \left[\nabla_x\Phi_{\rho^n}\otimes \nabla_x\Phi_{\rho^n} - \frac{1}{2}|\nabla_x\Phi_{\rho^n}|^2\mathbb{I}_d\right] \cdot u^n\diff x\\
        & + \int_{\T^d}\frac{c}{\eps}\divergence_x\left[\frac{1}{2}|\nabla_x\rho^n|^2\mathbb{I}_d + \nabla_x\rho^n\otimes\nabla_x\rho^n \right] \cdot u^n\diff x - \frac{c}{\eps}\int_{\T^d}\rho^n\nabla_x\rho^n\cdot\nabla_x(\divergence_x u^n)\diff x\\
        &+ \frac{1}{\eps^2}\int_{\T^d}\rho^n |u^n|^2\diff x + \frac{a}{\eps}\int_{\T^d}\nabla_x((\rho^n)^\gamma) u^n\diff x - \frac{b}{\eps}M_{\rho^n}\int_{\T^d}\nabla_x\Phi_{\rho^n}u^n\diff x + \mu ((u^n; u^n)) = 0,
    \end{aligned}
    \end{equation}
where we used \eqref{galerkingEP1} to write 
\begin{align*}
    \frac{1}{\eps}\int_{\T^d}\rho^n u^n\nabla_x u^n\cdot u^n\diff x = \int_{\T^d}\p_t\rho^n\frac{1}{2}|u^n|^2\diff x.
\end{align*}
Notice that by the virtue of \eqref{identityforPhi} and \eqref{galerkingEP1} 
\begin{align*}
  &\int_{\T^d}\frac{b}{\eps}\divergence_x \left[\nabla_x\Phi_{\rho^n}\otimes \nabla_x\Phi_{\rho^n} - \frac{1}{2}|\nabla_x\Phi_{\rho^n}|^2\mathbb{I}_d\right] \cdot u^n\diff x - \frac{b}{\eps}M_{\rho^n}\int_{\T^d}\nabla_x\Phi_{\rho^n}u^n\diff x\\
  &= -\frac{b}{\eps}\int_{\T^d}\rho^n\nabla_x\Phi_{\rho^n}u^n\diff x = -\frac{d}{dt}\int_{\T^d}\frac{b}{2}|\nabla_x \Phi_{\rho^n}|^2\diff x.
\end{align*}
Similarly by \eqref{identityforgradlaplace} and \eqref{galerkingEP1} 
\begin{align*}
    &\int_{\T^d}\frac{c}{\eps}\divergence_x\left[\frac{1}{2}|\nabla_x\rho^n|^2\mathbb{I}_d + \nabla_x\rho^n\otimes\nabla_x\rho^n \right] \cdot u^n\diff x - \frac{c}{\eps}\int_{\T^d}\rho^n\nabla_x\rho^n\cdot\nabla_x(\divergence_x u^n)\diff x\\
    &= -\frac{c}{\eps}\int_{\T^d}\divergence_x\left[\frac{1}{2}|\nabla_x\rho^n|^2\mathbb{I}_d + \rho^n\Delta\rho^n\mathbb{I}_d - \nabla_x\rho^n\otimes\nabla_x\rho^n\right]\cdot u^n\diff x = -\frac{c}{\eps}\int_{\T^d}\rho^n\nabla_x(\Delta\rho^n)\cdot u^n\diff x \\
    &= -c\int_{\T^d}\Delta\rho^n\p_t\rho^n\diff x = \frac{d}{dt}\int_{\T^d}\frac{c}{2}|\nabla_x\rho^n|^2\diff x.
\end{align*}
Again using \eqref{galerkingEP1} we obtain 
\begin{align*}
    &\frac{a}{\eps}\int_{\T^d}\nabla_x(\rho^n)^\gamma\cdot u^n\diff x = \frac{a}{\eps}\int_{\T^d}\gamma(\rho^n)^{\gamma - 1}\nabla_x\rho^n\cdot u^n\diff x = \frac{a}{\eps}\int_{\T^d}\nabla_x\left(\frac{\gamma}{\gamma - 1}\rho^n\right)^{\gamma -1}\rho^n u^n\diff x\\
    &= a\int_{\T^d}\left(\frac{\gamma}{\gamma - 1}\rho^n\right)^{\gamma -1}\p_t\rho^n\diff x = \frac{d}{dt}\int_{\T^d}\frac{a}{\gamma - 1}(\rho^n)^{\gamma}\diff x.
\end{align*}
Therefore, \eqref{almostenergygalerkin} takes the form 
\begin{align*}
    &\frac{d}{dt}\int_{\T^d}\frac{1}{2}\rho^n |u^n|^2\diff x - \frac{d}{dt}\int_{\T^d}\frac{b}{2}|\nabla_x \Phi_{\rho^n}|^2\diff x\\
    &+ \frac{d}{dt}\int_{\T^d}\frac{c}{2}|\nabla_x\rho^n|^2\diff x + \frac{d}{dt}\int_{\T^d}\frac{a}{\gamma - 1}(\rho^n)^{\gamma}\diff x + \mu((u^n; u^n)) = -\frac{1}{\eps^2}\int_{\T^d}\rho^n|u^n|^2\diff x,
\end{align*}
which we may write as 
\begin{equation}\label{energygalerking}
    \begin{aligned}
    &\int_{\T^d}\left[\frac{1}{2}\rho^n |u^n|^2 -\frac{b}{2}|\nabla_x \Phi_{\rho^n}|^2 + \frac{c}{2}|\nabla_x\rho^n|^2 + \frac{a}{\gamma - 1}(\rho^n)^{\gamma}\right](\tau, x)\diff x + \int_0^\tau\mu((u^n; u^n))\\ &= \int_{\T^d}\frac{1}{2}\rho_0 |u_0|^2 -\frac{b}{2}|\nabla_x \Phi_{\rho^n}|^2(0,x) + \frac{c}{2}|\nabla_x\rho^n|^2(0, x) + \frac{a}{\gamma - 1}(\rho_0)^{\gamma}\diff x -\frac{1}{\eps^2}\int_0^\tau\int_{\T^d}\rho^n|u^n|^2\diff x.
    \end{aligned}
\end{equation}
From this point forward, the choice of the sign of $b$ becomes important.\\
\underline{\textbf{If $\mathbf{b \leq 0}$}}, \eqref{energygalerking} implies that 
\begin{align*}
    \int_0^\tau \|u^n\|_{W^{6,2}_0(\T^d)}^2\diff x \leq \frac{C}{\mu}.
\end{align*}
By the Sobolev embeddings 
\begin{align}\label{morreyforsobolevflow}
W^{6, 2}_0(\T^d) \hookrightarrow C^4(\T^d),
\end{align}
which means that we may deduce
\begin{align*}
    \int_0^\tau \|u^n\|_{C^4(\T^d)}^2\diff x \leq \frac{C}{\mu}.
\end{align*}
With the use of the inequality above and the formula \eqref{formuladensitygalerkin} one gets 
\begin{multline}\label{galerkinboundsdensity}
    \|\rho^n\|_{L^\infty((0, T)\times \T^d)} + \int_0^T \|\p_t\rho^n\|_{L^2(\T^d)}^2 \diff t\\
    + \int_0^T \|\nabla_x\rho^n\|_{L^2(\T^d)}^2 + \|D^2_x\rho^n\|_{L^2(\T^d)}^2 + \|D^3_x\rho^n\|_{L^2(\T^d)}^2 \diff t \leq C(\mu^{-1})
\end{multline}
and after multiplying \eqref{galerkinEP2} by $\p_t c_i$ and summing over all $i$ 
\begin{align}\label{galerkingboundsvelocity}
    \int_0^T\|\p_t u^n\|_{L^2(\T^d)}^2\diff t + \mu\|u^n\|_{L^\infty((0, T); W^{6, 2}_0(\T^d))} \leq C(\mu^{-1}).
\end{align}
That is obtained using the bound
$$
\rho_*\int_{\T^d}|\p_t u^n|^2\diff x \leq \int_{\T^d}\rho^n|\p_t u^n|^2\diff x
$$
and using the Young's inequality together with \eqref{galerkinboundsdensity} to estimate the rest of the terms.\\
\underline{$\mathbf{\frac{2}{K} > b > 0}$ \textbf{and} $\mathbf{\gamma > 2 - \frac{2}{d}}$}. Here, one may obtain similarly as in the proof of Lemma \ref{lem:boundfornegativebees} 
\begin{align}\label{galerkingpotentialinequality}
    \int_{\T^d}|\nabla_x{\Phi_{\rho^n}}|^2\diff x \leq K\int_{\T^d}h(\rho^n)\diff x.
\end{align}
Using that in \eqref{energygalerking} and remembering that $\frac{b}{2} < \frac{1}{K}$ we proceed as in the case $b < 0$.\\
\\
\noindent This ends the proof of the bounds \ref{galerkincompactness1} - \ref{galerkincompactness5}, for both cases. Equipped with the proven bounds, we can easily see \eqref{weakconvergencegalerkin} as a consequence of Banach--Alaoglu Theorem, \eqref{strongconvergencesgalerkin} of Aubin--Lions Lemma and \eqref{potentialconvergencegalerkin} of Poisson's equation \eqref{galerkinEP3}.
\end{proof}

\subsection{Passage to dissipative measure-valued solutions}
Now we may proceed to obtaining the dissipative measure-valued solutions. For this short time, we come back to the index $\mu$, that we have abandoned before, at the beginning of the previous subsection. Notice, that after converging with $n\to\infty$ we obtain from \eqref{energygalerking}
\begin{equation}\label{energyepsilon}
    \begin{aligned}
    &\int_{\T^d}\left[\frac{1}{2}\rho^\mu |u^\mu|^2 -\frac{b}{2}|\nabla_x \Phi_{\rho^\mu}|^2 + \frac{c}{2}|\nabla_x\rho^\mu|^2 + \frac{a}{\gamma - 1}(\rho^\mu)^{\gamma}\right](\tau, x)\diff x\\ &\leq \int_{\T^d}\frac{1}{2}\rho^\mu_0 |u^\mu_0|^2 -\frac{b}{2}|\nabla_x \Phi_{\rho^\mu}|^2(0,x) + \frac{c}{2}|\nabla_x\rho^\mu|^2(0, x) + \frac{a}{\gamma - 1}(\rho^\mu_0)^{\gamma}\diff x - \frac{1}{\eps^2}\int_0^\tau\int_{\T^d}\rho^\mu|u^\mu|^2\diff x
    \end{aligned}
\end{equation}
We use this inequality to converge in \eqref{approximateEKP1} - \eqref{approximateEKP3}. Notice that \eqref{energyepsilon} implies, that $\{\nabla_x\Phi_{\rho^\mu}\}_{\mu > 0}$ is bounded in $L^\infty((0, T); L^2(\T^d))$. This bound is obvious for $b< 0$. In the second case, we argue with the analogous inequality to \eqref{galerkingpotentialinequality}. Similarly, the density $\{\rho^\mu\}_{\mu > 0}$ is bounded in $L^\infty((0, T); L^\gamma(\T^d))$. Therefore, no concentrations appear when converging with the terms above, meaning Theorem \ref{app:fundamental_thm} applies. Notice further, that if $c > 0$, then by Poincar\'{e}'s inequality  $\rho\in L^\infty((0, T); L^2(\T^d))$, thus making sense of the term $\rho\nabla_x\rho$. For the rest of the terms appearing in \eqref{approximateEKP1} - \eqref{approximateEKP3}, we merely have $L^\infty((0, T); L^1(\T^d))$ bounds, and therefore we need to use Lemma \ref{app:concentration}. We lack a straightforward bound only for the term $\{\rho^\mu u^\mu\}_{\mu > 0}$. To obtain it, we notice that the $L^\infty((0, T); L^\gamma(\T^d))$ bound on the density, implies 
\begin{align*}
    \int_{\T^d}\rho^\mu u^\mu\diff x \leq \frac{1}{2}\int_{\T^d}\rho^\mu\diff x + \frac{1}{2}\int_{\T^d}\rho^\mu|u^\mu|^2\diff x \leq C,
\end{align*}
which is again enough to use Lemma \ref{app:concentration} for the convergence. Considerations above give us the weak formulations \eqref{eq:massmvs} - \eqref{eq:poissonmvs}. To obtain \eqref{eq:energyinequalitymvs} we simply apply Lemma \ref{app:concentration} to \eqref{energyepsilon}.

\section{From EKP to CHKS.}

\noindent In the present section we shall finally consider the passage from the Euler--Korteweg--Poisson system to the Cahn--Hillard--Keller--Segel model in the high friction and long time limit. In fact, we will consider the case of dissipative measure-valued solutions for the Euler--Korteweg--Poisson system, for which we proved the existence. But first, we need a tool in the form of the relative entropy inequality, stated in Theorem \ref{prop:relentropyinequality}, only then after we will give a proof of Theorem~\ref{theo:poissonlimit}. As mentioned in the introduction, the proof follows the similar lines to \cite{lattanzio2017fromgas}. When it will be obvious that we talk about the solutions concerning the high friction \eqref{eq:EKP} we will omit the index $\eps$.

\begin{prop}
Let $(r, \Phi_r)$ be a strong solution to \eqref{eq:CHKS} (see Definition \ref{def:strongsolutions}), then $(r, U, \Phi_r)$, where
\begin{align}\label{strongvelocitydefinition}
U = -\eps\nabla_x\left(\frac{a\gamma}{\gamma - 1}r^{\gamma -1} - b\Phi_r - c\Delta_x r\right)
\end{align}
is a strong solution to the system
\begin{equation}\label{strongeuler-poisson}
   \begin{aligned}
		\partial_tr+\frac{1}{\eps}\operatorname{div}_x(r U)&=0,\\
		\partial_t(r U)+ \frac{1}{\eps}\operatorname{div}_x(r U\otimes U)+ \frac{1}{\eps^2}r U&=-\frac{a}{\eps}\nabla_xr^{\gamma} - \frac{b}{\eps}\operatorname{div}_x\left(\frac{1}{2}|\nabla_x\Phi_r|^2\mathbb{I}_d-\nabla_x\Phi_r\otimes\nabla_x\Phi_r \right)\\
		+ \frac{b}{\eps}M_{r}\nabla_x\Phi_{r}+ \frac{c}{\eps}\operatorname{div}_x&\left(\frac{1}{2}|\nabla_xr|^2\mathbb{I}_d + r\Delta_xr\mathbb{I}_d-\nabla_xr\otimes\nabla_xr \right) + e(r, U),\\
		-\Delta_x\Phi_{r}&=r-M_{r},
	\end{aligned}
\end{equation}
for
\begin{align}\label{def:errorterm}
e = e(r, U) := \partial_t(r U)+ \frac{1}{\eps}\operatorname{div}_x(r U\otimes U)
\end{align}
as well as satisfies the energy inequality 
\begin{multline}\label{strongenergyproposition}
\int_{\T^d}^{}\left(\frac{1}{2}r|U|^2+\frac{a}{\gamma-1}r^{\gamma}-\frac{b}{2}|\nabla\Phi_{r}|^2+\frac{c}{2}|\nabla r|^2\right)(\tau, x)\,\diff x \leq \int_0^\tau\int_{\T^d}U\cdot e\diff x\diff t\\
+ \int_{\T^d}^{}\left(\frac{1}{2}r|U|^2+\frac{a}{\gamma-1}r^{\gamma}-\frac{b}{2}|\nabla\Phi_{r}|^2+\frac{c}{2}|\nabla r|^2\right)(0, x)\,\diff x -\frac{1}{\eps^2}\int_0^\tau\int_{\T^d}r|U|^2\diff x\diff t 
\end{multline}
\end{prop}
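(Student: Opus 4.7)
The proposition splits into two parts: (a) verifying that $(r, U, \Phi_r)$ satisfies the Euler--Korteweg--Poisson--type system \eqref{strongeuler-poisson}, and (b) establishing the energy inequality \eqref{strongenergyproposition}. Both follow from elementary manipulations exploiting (i) the CHKS equation \eqref{eq:CHKS} satisfied by $(r,\Phi_r)$, (ii) the definition \eqref{strongvelocitydefinition} of $U$, (iii) the Poisson equation $-\Delta_x\Phi_r = r - M_r$ (with $M_r$ constant in $t$), and (iv) the algebraic identities \eqref{identityforPhi} and \eqref{identityforgradlaplace} recorded earlier. The regularity required in Definition \ref{def:strongsolutions} is exactly what is needed to make every integration by parts below classical. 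No analysis beyond smooth calculus is used.

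\textbf{Part (a): the modified Euler--Korteweg--Poisson system.} For the continuity equation, observe that since $r\nabla_x h'(r)=a\nabla_x r^\gamma$, the CHKS flux can be rewritten as $r\nabla_x\bigl(h'(r)-b\Phi_r-c\Delta_x r\bigr)$, which equals $-\tfrac{1}{\eps}rU$ by \eqref{strongvelocitydefinition}; the CHKS equation is therefore exactly $\partial_t r + \tfrac{1}{\eps}\divergence_x(rU)=0$. For the momentum equation, the claim is essentially tautological once $e(r,U)$ is defined by \eqref{def:errorterm}: differentiating \eqref{strongvelocitydefinition} gives
\begin{equation*}
\frac{1}{\eps^{2}}rU \;=\; -\frac{a}{\eps}\nabla_x r^{\gamma} + \frac{b}{\eps}\,r\nabla_x\Phi_r + \frac{c}{\eps}\,r\nabla_x\Delta_x r.
\end{equation*}
Applying \eqref{identityforPhi} to split $r\nabla_x\Phi_r = (r-M_r)\nabla_x\Phi_r + M_r\nabla_x\Phi_r$ and recasting $(r-M_r)\nabla_x\Phi_r$ in divergence form, and applying \eqref{identityforgradlaplace} to $r\nabla_x\Delta_x r$, recovers exactly the right-hand side of \eqref{strongeuler-poisson} once $e(r,U)$ is moved back to the left.

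\textbf{Part (b): the energy inequality.} Multiply the momentum equation in \eqref{strongeuler-poisson} by $U$ and integrate over $\T^d$. A standard manipulation using $\partial_t r + \tfrac{1}{\eps}\divergence_x(rU)=0$ yields
\begin{equation*}
\int_{\T^d} U\cdot\Bigl[\partial_t(rU)+\tfrac{1}{\eps}\divergence_x(rU\otimes U)\Bigr]\diff x \;=\; \frac{d}{dt}\int_{\T^d}\tfrac{1}{2}r|U|^{2}\diff x,
\end{equation*}
while the friction term contributes $\eps^{-2}\int_{\T^d}r|U|^{2}\diff x$ and the error term contributes $\int_{\T^d}U\cdot e\diff x$. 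For the remaining right-hand side terms I would first recombine the divergence-form Poisson pieces with $\tfrac{b}{\eps}M_r\nabla_x\Phi_r$ into $\tfrac{b}{\eps}r\nabla_x\Phi_r$ via \eqref{identityforPhi}, and the Korteweg divergence pieces into $\tfrac{c}{\eps}r\nabla_x\Delta_x r$ via \eqref{identityforgradlaplace}. Then:
\begin{itemize}
\item The pressure term gives $-\tfrac{a}{\eps}\int U\cdot\nabla_x r^\gamma\diff x = \tfrac{a}{\eps}\int r^\gamma\divergence_x U\diff x = -\tfrac{d}{dt}\int\tfrac{a}{\gamma-1}r^\gamma\diff x$, using $r\nabla_x h'(r)=a\nabla_x r^\gamma$ and continuity.
\item The Poisson term gives $\tfrac{b}{\eps}\int rU\cdot\nabla_x\Phi_r\diff x = b\int \Phi_r\,\partial_t r\diff x = -b\int\Phi_r\,\partial_t\Delta_x\Phi_r\diff x = \tfrac{b}{2}\tfrac{d}{dt}\int|\nabla_x\Phi_r|^{2}\diff x$, where one integration by parts invokes continuity, the time-derivative of the Poisson equation uses $\partial_t M_r=0$, and a final integration by parts converts $\Phi_r\partial_t\Delta_x\Phi_r$ into $|\nabla_x\partial_t\Phi_r|$-pairings.
\item The Korteweg term likewise gives $\tfrac{c}{\eps}\int rU\cdot\nabla_x\Delta_x r\diff x = c\int \Delta_x r\,\partial_t r\diff x = -\tfrac{c}{2}\tfrac{d}{dt}\int|\nabla_x r|^{2}\diff x$.
\end{itemize}
Collecting the four time-derivative contributions and integrating in $t$ from $0$ to $\tau$ yields \eqref{strongenergyproposition} (in fact with equality, since every step is reversible for strong solutions; stating it as an inequality is harmless and matches the measure-valued energy inequality \eqref{eq:energyinequalitymvs} used later).

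\textbf{Expected obstacle.} The argument is purely computational; the only delicate point is the bookkeeping around the mean-value term $M_r\nabla_x\Phi_r$: it is precisely this correction that converts the divergence-form stress tensor into the clean multiplier $r\nabla_x\Phi_r$, which in turn is what pairs well with the continuity equation through $\int rU\cdot\nabla_x\Phi_r = -\int \Phi_r\divergence_x(rU)$. Once this combination is identified, the rest of the energy estimate is a direct chain of integrations by parts, and no regularity issues arise thanks to Definition \ref{def:strongsolutions}.
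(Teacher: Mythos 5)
Your proposal is correct and follows essentially the same route as the paper, which simply declares part (a) a straightforward calculation and obtains \eqref{strongenergyproposition} "in the same way as the energy inequality for the Galerkin system" — i.e.\ exactly your computation: multiply the momentum balance by $U$, use the continuity equation together with the identities \eqref{identityforPhi} and \eqref{identityforgradlaplace} to convert the pressure, Poisson and Korteweg terms into time derivatives of $\tfrac{a}{\gamma-1}r^\gamma$, $-\tfrac{b}{2}|\nabla_x\Phi_r|^2$ and $\tfrac{c}{2}|\nabla_x r|^2$, with the extra $\int U\cdot e$ term. The only discrepancy (the sign in front of the $b$-divergence stress in \eqref{strongeuler-poisson} versus your recombination into $\tfrac{b}{\eps}r\nabla_x\Phi_r$) traces to a sign typo in the paper's statement rather than to your argument, whose convention is the one consistent with \eqref{strongvelocitydefinition}, the divergence form of \eqref{eq:EKP}, and the energy functional.
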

\begin{proof}
The fact that $U$ solves the system \eqref{strongeuler-poisson} is a straightforward calculation. The energy inequality \eqref{strongenergyproposition} is obtained in the same way as the energy inequality for Galerkin system above, thus we omit the details.
\end{proof}

\subsection{Relative entropy inequality}

Let us first show the following relative entropy inequality.
\begin{thm}\label{prop:relentropyinequality}
	Let $(\nu^{\eps}, m^{\eps})$ be a high friction sequence of dissipative measure-valued solutions of ~\eqref{eq:EKP} with initial data $(\rho^{\eps}_0, u^{\eps}_0)$ and $\gamma \geq 2$, let $(r,\Phi_r)$ be a strong solution (see Definition \ref{def:strongsolutions}) of ~\eqref{eq:CHKS} with initial data $r_0\in C^3(\T^d)$ satisfying $r_0 \geq \rho_*$ for $\rho_* > 0$. Then, the corresponding relative entropy obeys the inequality
	\begin{align*}
		\mathcal{E}^{\eps}_{\operatorname{rel}}(\tau)+\frac{1}{2\eps^2}\int_{0}^{\tau}\int_{\T^d}^{}\overline{\rho^{\eps}|u^{\eps}-U^{\eps}|^2}\,\diff x\diff t\leq \mathcal{E}^{\eps}_{\operatorname{rel}}(0)+C\int_{0}^{t}\mathcal{E}^{\eps}_{\operatorname{rel}}(t)\diff t+O(\eps^4)
	\end{align*}
	for all $\tau\in [0,T]$, where $C$ is some constant depending only on the strong solution $(r,\Phi_r)$.
\end{thm}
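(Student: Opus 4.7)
My plan is to carry out the relative entropy computation of Lattanzio--Tzavaras \cite{lattanzio2017fromgas} adapted to the measure-valued framework and extended by the Korteweg correction. First I expand the four quadratic pieces of the entropy: $\tfrac12\overline{\rho^\eps|u^\eps-U^\eps|^2}=\tfrac12\overline{\rho^\eps|u^\eps|^2}-\overline{\rho^\eps u^\eps}\cdot U^\eps+\tfrac12\overline{\rho^\eps}|U^\eps|^2$, $\overline{h(\rho^\eps|r)}=\overline{h(\rho^\eps)}-h(r)-h'(r)(\overline{\rho^\eps}-r)$, and analogously for the potential and Korteweg pieces. This decomposes $\mathcal{E}^\eps_{\operatorname{rel}}(\tau)$ into (i) the measure-valued self energy, (ii) the strong self energy $\tfrac12 r|U^\eps|^2+h(r)-\tfrac{b}{2}|\nabla_x\Phi_r|^2+\tfrac{c}{2}|\nabla_x r|^2$ together with the correction $\tfrac12(\overline{\rho^\eps}-r)|U^\eps|^2$, and (iii) the four cross terms $-\overline{\rho^\eps u^\eps}\cdot U^\eps$, $-h'(r)(\overline{\rho^\eps}-r)$, $b\,\overline{\nabla_x\Phi_{\rho^\eps}}\cdot\nabla_x\Phi_r$, and $-c\,\overline{\nabla_x\rho^\eps}\cdot\nabla_x r$.

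Each block is then treated separately. The measure-valued self energy is controlled by \eqref{eq:energyinequalitymvs}, producing in particular the dissipation $-\tfrac{1}{\eps^2}\int_0^\tau\int\overline{\rho^\eps|u^\eps|^2}$. The strong self energy is differentiated in time via \eqref{strongenergyproposition} together with the strong continuity equation $\partial_tr=-\tfrac{1}{\eps}\operatorname{div}_x(rU^\eps)$, which supplies $-\tfrac{1}{\eps^2}\int r|U^\eps|^2$ and the error $\int U^\eps\cdot e$ with $e$ from \eqref{def:errorterm}. The cross and correction terms are obtained from the weak formulations of the measure-valued solution with test functions built out of the strong solution: \eqref{eq:momentummvs} with $\varphi=U^\eps$ (from which $-\tfrac{1}{\eps^2}\int\overline{\rho^\eps u^\eps}\cdot U^\eps$ emerges), \eqref{eq:massmvs} with $\psi=\tfrac12|U^\eps|^2$ for the $\tfrac12\overline{\rho^\eps}|U^\eps|^2$ contribution, \eqref{eq:massmvs} with $\psi=h'(r)$, \eqref{eq:poissonmvs} with $\chi=b\Phi_r$, and \eqref{eq:massmvs} with $\psi=c\Delta_x r$ (after an integration by parts of $\overline{\rho^\eps u^\eps}\cdot\nabla_x\Delta_x r$). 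The latter test function is precisely why $(r,\Phi_r)\in C^{5,1}$ is required in Definition~\ref{def:strongsolutions} whenever $c>0$.

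Assembling everything and using $U^\eps=-\eps\nabla_x(h'(r)-b\Phi_r-c\Delta_x r)$ to rewrite $\partial_tU^\eps$, the $\tfrac{1}{\eps^2}$-scale contributions reorganise into a full copy of $\tfrac{1}{\eps^2}\int\overline{\rho^\eps|u^\eps-U^\eps|^2}$. The surviving terms are quadratic in the relative quantities $\overline{\rho^\eps}-r$, $\overline{u^\eps-U^\eps}$, $\overline{\nabla_x\Phi_{\rho^\eps}}-\nabla_x\Phi_r$, $\overline{\nabla_x\rho^\eps}-\nabla_x r$, multiplied by uniformly bounded derivatives of $(r,\Phi_r)$ furnished by Definition~\ref{def:strongsolutions} and by $r\geq\rho_*>0$. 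Since $\gamma\geq 2$, Lemma~\ref{ineq:inequalityforhrhor} yields $|\overline{\rho^\eps}-r|^2\lesssim\overline{h(\rho^\eps|r)}$ without any upper bound on the density (this is exactly why the stronger hypothesis $\gamma\geq 2$ is imposed here rather than the $\gamma>2-2/d$ of Theorem~\ref{thm:existenceofmeasurevaluedsolutions}); hence every residual of this type is dominated by $C\,\mathcal{E}^\eps_{\operatorname{rel}}(t)$. The concentration measures $m^{\rho u}$, $m^{\rho u\otimes u}$, $m^{\nabla_x\Phi\otimes\nabla_x\Phi}$ and $m^{\nabla_x\rho\otimes\nabla_x\rho}$ appearing along the way are dominated by the nonnegative components of $m$ entering $\mathcal{E}^\eps_{\operatorname{rel}}$ via Proposition~\ref{app:concetrationmeasuresbounding}. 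The scaling $U^\eps=O(\eps)$ forces $e=O(\eps)$ uniformly in space-time, and an application of Young's inequality in the relative momentum balance absorbs exactly half of the dissipation $\tfrac{1}{\eps^2}\int\overline{\rho^\eps|u^\eps-U^\eps|^2}$ against the mixed term, the residual being $O(\eps^4)$; this explains the precise coefficient $\tfrac{1}{2\eps^2}$ in the statement.

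The main obstacle is the attractive case $b>0$, where $-\tfrac{b}{2}\overline{|\nabla_x\Phi_{\rho^\eps}-\nabla_x\Phi_r|^2}$ has the wrong sign. This is overcome by Lemma~\ref{lem:boundfornegativebees}: under the hypothesis $b<2/K$ one has the coercivity $\overline{h(\rho^\eps|r)}-\tfrac{b}{2}\overline{|\nabla_x\Phi_{\rho^\eps}-\nabla_x\Phi_r|^2}\geq(1-\tfrac{Kb}{2})\overline{h(\rho^\eps|r)}\geq 0$, ensuring both that $\mathcal{E}^\eps_{\operatorname{rel}}$ is nonnegative and that the cross-potential residuals remain bounded by $C\,\mathcal{E}^\eps_{\operatorname{rel}}$. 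No Gronwall estimate is required at this stage: the reorganisation above yields exactly the integral inequality of Theorem~\ref{prop:relentropyinequality}, and Gronwall is deferred to the proof of the convergence result Theorem~\ref{theo:poissonlimit}.
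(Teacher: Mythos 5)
Your proposal is correct and follows essentially the same route as the paper's proof: testing the measure-valued weak formulations against the strong solution (the paper bundles your separate test functions into the single choice $\psi=\theta(t)(h'(r)-b\Phi_r-c\Delta_x r-\tfrac12|U^\eps|^2)$, $\varphi=\theta(t)U^\eps$, and invokes the Poisson weak formulation in the subsequent reduction), combining with the two energy inequalities and the error term $e=O(\eps)$, absorbing half the friction dissipation by Young's inequality to produce the $O(\eps^4)$ remainder, and controlling the residual quadratic terms by $\mathcal{E}^\eps_{\operatorname{rel}}$ via Lemma \ref{ineq:inequalityforhrhor} (using $\gamma\geq2$), Lemma \ref{lem:boundfornegativebees} for $b>0$, and Proposition \ref{app:concetrationmeasuresbounding} for the concentration measures. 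The differences are only organizational bookkeeping, not a different argument.
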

\begin{proof}
	Fix $\tau\in [0,T]$. We define the energy of the measure-valued solution $(\nu^{\eps},\mu^{\eps})$ at time $\tau$ as
	\begin{align*}
		\mathcal{E}^{\mathrm{mv}}(\tau):=\int_{\T^d}^{}\frac{1}{2}\overline{\rho^{\eps}|u^{\eps}|^2}+\frac{a}{\gamma-1}\overline{(\rho^{\eps})^{\gamma}}-\frac{b}{2}\overline{|\nabla\Phi_{\rho^\eps}|^2}+\frac{c}{2}\overline{|\nabla\rho^{\eps}|^2}\,\diff x,
	\end{align*}
	and an analogous quantity for a strong solution \eqref{strongeuler-poisson}
	\begin{align*}
	    \mathcal{E}^{\mathrm{s}}(\tau):=\int_{\T^d}^{}\frac{1}{2}r|U|^2+\frac{a}{\gamma-1}r^{\gamma}-\frac{b}{2}|\nabla\Phi_{r}|^2+\frac{c}{2}|\nabla r|^2\,\diff x.
	\end{align*}
	By the energy inequality \eqref{eq:energyinequalitymvs} we obtain
	\begin{align}\label{ineq:ineqmvenergy}
	    \mathcal{E}^{\mathrm{mv}}(\tau) - \mathcal{E}^{\mathrm{mv}}(0) \leq -\frac{1}{\eps^2}\int_0^\tau\int_{\T^d}\overline{\rho|u|^2}\diff x\diff t
	\end{align}
	And with the use of \eqref{strongenergyproposition}
	\begin{align}\label{ineq:ineqstrongenergy}
	    \mathcal{E}^{\mathrm{s}}(\tau) - \mathcal{E}^{\mathrm{s}}(0) \leq -\frac{1}{\eps^2}\int_0^\tau\int_{\T^d}r|U|^2\diff x\diff t + \int_0^\tau\int_{\T^d}U\cdot e\diff x\diff t
	\end{align}
	Keeping that in mind we turn our attention to the weak formulations of continuity equation as well as momentum equation. From the first one:
	\begin{align}\label{eq:relativeconti}
	    -\int_0^\infty \int_{\T^d}\p_t\psi(\overline{\rho} - r) + \frac{1}{\eps}\nabla_x\psi(\overline{\rho u} - rU)\diff x\diff t - \int_{\T^d}\psi(0, x)(\overline{\rho} - r)(0, x)\diff x = 0
	\end{align}
	and from the second one 
	\begin{equation}\label{eq:relativemoment}
	\begin{split}
	    &-\int_0^\infty\int_{\T^d}\left[\p_t\phi(\overline{\rho u} - rU) + \frac{1}{\eps}(\overline{\rho u\otimes u} - rU\otimes U):\nabla_x\phi + \frac{1}{\eps}\DIV_x\phi(\overline{\rho^\gamma} - r^\gamma)\right]\diff x\diff t\\
	    &- \int_{\T^d}\phi(0, x)(\overline{\rho u} - rU)(0, x)\diff x = -\frac{1}{\eps^2}\int_0^\infty\int_{\T^d}\phi(\overline{\rho u} - rU)\diff x\diff t - \int_0^\infty \int_{\T^d}\phi\cdot e\diff x\diff t\\
	    &+\frac{1}{\eps}\int_0^\infty\int_{\T^d}\left[b\,\phi(M_{\overline{\rho}}\,\overline{\nabla_x\Phi_{\overline{\rho}}} - M_r\,\nabla_x\Phi_r) + b\nabla_x\phi:\left(\overline{\nabla_x\Phi_{\overline{\rho}}\otimes \nabla_x\Phi_{\overline{\rho}}} - \nabla_x\Phi_r\otimes \nabla_x\Phi_r\right)\right]\diff x\diff t\\
	    &-\frac{1}{\eps}\int_0^\infty\int_{\T^d}\frac{b}{2}\DIV_x\phi\left(\overline{|\nabla_x\Phi_{\overline{\rho}}|^2} - |\nabla_x\Phi_r|^2\right)\diff x\diff t+\frac{1}{\eps}\int_0^\infty\int_{\T^d}\frac{c}{2}\,\DIV_x\phi\left(\overline{|\nabla_x\rho|^2}-|\nabla_x r|^2\right)\diff x\diff t\\
	    & + \frac{1}{\eps}\int_0^\infty\int_{\T^d}\left[c\,\nabla_x(\DIV_x\phi)(\overline{\rho\nabla_x\rho} - r\nabla_x r) + c\,\nabla_x\phi :(\overline{\nabla_x\rho\otimes\nabla_x\rho} - \nabla_x r\otimes \nabla_x r)\right]\diff x\diff t.
	    \end{split}
	\end{equation}
	Now set 
	\begin{align*}
	    \theta(t)=\left\{\begin{array}{ll}
	          1, &\text{ when }0\leq t < \tau\\
	          \frac{\tau - t}{\mu} + 1, &\text{ when }\tau \leq t < \tau + \mu\\
	          0, &\text{ otherwise}
	    \end{array}\right.,
	\end{align*}
	as well as
	$$
	\psi(t) = \theta(t)\left(\frac{a\gamma}{\gamma - 1}r^{\gamma - 1} - b\,\Phi_r - c\,\Delta_x r - \frac{1}{2}|U|^2\right),\qquad \phi(t) = \theta(t)\,U.
	$$
	Putting it into \eqref{eq:relativeconti} and \eqref{eq:relativemoment}, converging with $\mu\to 0$, we obtain respectively
	\begin{equation}\label{eq:relativeconti2}
	\begin{split}
	    &\int_{\T^d}\left(\frac{a\gamma}{\gamma - 1}r^{\gamma - 1} - b\,\Phi_r - c\,\Delta_x r - \frac{1}{2}|U|^2\right)(\overline{\rho} - r)\Biggr|_0^\tau\diff x\\
	    &-\int_0^\tau \int_{\T^d}\p_t\left(\frac{a\gamma}{\gamma - 1}r^{\gamma - 1} - b\,\Phi_r - c\,\Delta_x r - \frac{1}{2}|U|^2\right)(\overline{\rho} - r)\diff x\diff t\\
	    &- \frac{1}{\eps}\int_0^\tau \int_{\T^d}\nabla_x\left(\frac{a\gamma}{\gamma - 1}r^{\gamma - 1} - b\,\Phi_r - c\,\Delta_x r - \frac{1}{2}|U|^2\right)(\overline{\rho u} - rU)\diff x\diff t = 0,
	\end{split}
	\end{equation}
	and
	\begin{equation}\label{eq:relativemoment2}
	\begin{split}
	    &-\int_0^\tau\int_{\T^d}\left[\p_t U(\overline{\rho u} - rU) + \frac{1}{\eps}(\overline{\rho u\otimes u} - rU\otimes U):\nabla_x U + \frac{1}{\eps}\DIV_x U(\overline{\rho^\gamma} - r^\gamma)\right]\diff x\diff t\\
	    & + \int_{\T^d}U(\overline{\rho u} - rU)\Biggr|_0^\tau\diff x = -\frac{1}{\eps^2}\int_0^\tau\int_{\T^d}U(\overline{\rho u} - rU)\diff x\diff t - \int_0^\tau \int_{\T^d}U\cdot e\diff x\diff t\\
	    &+\frac{1}{\eps}\int_0^\tau\int_{\T^d}\left[b\,U(M_{\overline{\rho}}\,\overline{\nabla_x\Phi_{\overline{\rho}}} - M_r\,\nabla_x\Phi_r) + b\nabla_xU:\left(\overline{\nabla_x\Phi_{\overline{\rho}}\otimes \nabla_x\Phi_{\overline{\rho}}} - \nabla_x\Phi_r\otimes \nabla_x\Phi_r\right)\right]\diff x\diff t\\
	    &-\frac{1}{\eps}\int_0^\tau\int_{\T^d}\frac{b}{2}\DIV_xU\left(\overline{|\nabla_x\Phi_{\overline{\rho}}|^2} - |\nabla_x\Phi_r|^2\right)\diff x\diff t+\frac{1}{\eps}\int_0^\tau\int_{\T^d}\frac{c}{2}\,\DIV_xU(\overline{|\nabla_x\rho|^2}-|\nabla_x r|^2)\diff x\diff t\\
	    & + \frac{1}{\eps}\int_0^\tau\int_{\T^d}\left[c\,\nabla_x(\DIV_xU)(\overline{\rho\nabla_x\rho} - r\nabla_x r) + c\,\nabla_xU :(\overline{\nabla_x\rho\otimes\nabla_x\rho} - \nabla_x r\otimes \nabla_x r)\right]\diff x\diff t.
	    \end{split}
	\end{equation}
	Subtracting  \eqref{ineq:ineqstrongenergy} + \eqref{eq:relativeconti2} + \eqref{eq:relativemoment2} from \eqref{ineq:ineqmvenergy}, we get
	\begin{equation}\label{ineq:relative1}
	    \begin{split}
	        & \mathcal{E}_{\mathrm{rel}}(\tau) - \mathcal{E}_{\mathrm{rel}}(0) \leq -\frac{1}{\eps^2}\int_0^\tau\int_{\T^d}\left(\overline{\rho|u|^2} - r|U|^2 - U(\overline{\rho u} - rU)\right)\diff x\diff t\\
	        &-\int_0^\tau\int_{\T^d}\left[\p_t U(\overline{\rho u} - rU) + \p_t\left(\frac{a\gamma}{\gamma - 1}r^{\gamma - 1} - b\,\Phi_r - c\,\Delta_x r - \frac{1}{2}|U|^2\right)(\overline{\rho} - r) \right]\diff x\diff t\\
	        &- \frac{1}{\eps}\int_0^\tau \int_{\T^d}\nabla_x\left(\frac{a\gamma}{\gamma - 1}r^{\gamma - 1} - b\,\Phi_r - c\,\Delta_x r - \frac{1}{2}|U|^2\right)(\overline{\rho u} - rU)\diff x\diff t\\
	        &-\frac{1}{\eps}\int_0^\tau\int_{\T^d}\left[b\,U(M_{\overline{\rho}}\,\overline{\nabla_x\Phi_{\overline{\rho}}} - M_r\,\nabla_x\Phi_r) + b\nabla_xU:\left(\overline{\nabla_x\Phi_{\overline{\rho}}\otimes \nabla_x\Phi_{\overline{\rho}}} - \nabla_x\Phi_r\otimes \nabla_x\Phi_r\right)\right]\diff x\diff t\\
	        &+\frac{1}{\eps}\int_0^\tau\int_{\T^d}\frac{b}{2}\DIV_xU\left(\overline{|\nabla_x\Phi_{\overline{\rho}}|^2} - |\nabla_x\Phi_r|^2\right)\diff x\diff t-\frac{1}{\eps}\int_0^\tau\int_{\T^d}\frac{c}{2}\,\DIV_xU(\overline{|\nabla_x\rho|^2}-|\nabla_x r|^2)\diff x\diff t\\
	        & - \frac{1}{\eps}\int_0^\tau\int_{\T^d}\left[c\,\nabla_x(\DIV_xU)(\overline{\rho\nabla_x\rho} - r\nabla_x r) + c\,\nabla_xU :(\overline{\nabla_x\rho\otimes\nabla_x\rho} - \nabla_x r\otimes \nabla_x r)\right]\diff x\diff t\\
	        & - \frac{1}{\eps}\int_0^\tau\int_{\T^d}\left[(\overline{\rho u\otimes u} - rU\otimes U):\nabla_x U + \frac{1}{\eps}\DIV_x U(\overline{\rho^\gamma} - r^\gamma)\right]\diff x\diff t.
	    \end{split}
	\end{equation}
	To estimate the right-hand side of the inequality above first notice that by the definition \eqref{strongvelocitydefinition} the function $U$ solves the equation
	$$
	\p_t U + \frac{1}{\eps}\nabla_x U\,U = -\frac{1}{\eps^2}U - \frac{1}{\eps}\nabla_x\left(\frac{a\gamma}{\gamma - 1}r^{\gamma - 1} - b\,\Phi_r - c\,\Delta_x r\right) + \frac{e}{r},
	$$
	where $e$ is defined by \eqref{def:errorterm}. Hence
	\begin{align*}
	    -U(\overline{\rho} - r)\left[\p_t U + \frac{1}{\eps}\nabla_x U\,U\right] = -U(\overline{\rho} - r)\left[-\frac{1}{\eps^2}U - \frac{1}{\eps}\nabla_x\left(\frac{a\gamma}{\gamma - 1}r^{\gamma - 1} - b\,\Phi_r - c\,\Delta_x r\right) + \frac{e}{r}\right]
	\end{align*}
	or equivalently
	\begin{equation}\label{eq:strongu1}
	    \begin{split}
	    &\p_t\left(-\frac{1}{2}|U|^2\right) - \frac{1}{\eps}\nabla_x U : (U\otimes U)(\overline{\rho} - r) = \frac{1}{\eps^2}|U|^2(\overline{\rho} - r)\\
	    &+ \frac{1}{\eps}U\cdot\nabla_x\left(\frac{a\gamma}{\gamma-1}r^{\gamma-1} - b\,\Phi_r - c\,\Delta_x r\right)(\overline{\rho} - r) - \frac{e}{r}U(\overline{\rho} - r).
	    \end{split}
	\end{equation}
    In a similar fashion, we may obtain
    \begin{equation}\label{eq:strongu2}
        \begin{split}
            &\p_t U(\overline{\rho u} - rU) + \frac{1}{\eps}\nabla_x U : (\overline{\rho u}\otimes U - r\,U\otimes U) = -\frac{1}{\eps^2}U\,\overline{\rho u} + \frac{1}{\eps^2}r|U|^2\\
            &- \frac{1}{\eps}(\overline{\rho u} - rU)\cdot\nabla_x\left(\frac{a\gamma}{\gamma - 1}r^{\gamma - 1} - b\,\Phi_r - c\,\Delta_x r\right) + \frac{e}{r}(\overline{\rho u} - rU).
        \end{split}
    \end{equation}
    Combining \eqref{eq:strongu1} and \eqref{eq:strongu2} we get
    \begin{equation}\label{eq:reduction1}
        \begin{split}
            &\p_t\left(-\frac{1}{2}|U|^2\right) + \p_t U(\overline{\rho u} - rU) + \frac{1}{\eps}\nabla_x\left(-\frac{1}{2}|U|^2\right)(\overline{\rho u} - rU) + \frac{1}{\eps}\nabla_x U: (\overline{\rho u\otimes u} - r\,U\otimes U)\\
            &- \frac{1}{\eps}\nabla_x U:(U\otimes\overline{\rho u} - \overline{\rho u} \otimes U) = -\frac{1}{\eps^2}(\overline{\rho u}\cdot U - \overline{\rho}|U|^2)\\
            &- \frac{1}{\eps}\nabla_x\left(\frac{a\gamma}{\gamma - 1}r^{\gamma - 1} - b\,\Phi_r - c\,\Delta_x r\right)(\overline{\rho u} - \overline{\rho}U) + \frac{e}{r}(\overline{\rho u} - \overline{\rho}U) + \frac{1}{\eps}\nabla_x U:\overline{\rho(u - U)\otimes(u - U)}.
        \end{split}
    \end{equation}
    To end the calculation notice that since $U$ is a gradient of some function, then $\nabla_x U$ is a symmetric matrix, thus
    $$
    \frac{1}{\eps}\nabla_x U:(U\otimes\overline{\rho u} - \overline{\rho u} \otimes U) = 0.
    $$
    Here, we introduce the notation for relative pressure $p(\rho) = \rho^\gamma$.
    \begin{align}\label{relativepressuredef}
    p(\rho|r) = p(\rho) - p(r) - p'(r)(\rho - r).
    \end{align}
    We may insert \eqref{eq:reduction1} into \eqref{ineq:relative1} to get
    \begin{equation}\label{ineq:relative3}
        \begin{split}
            & \mathcal{E}_{\mathrm{rel}}(\tau) - \mathcal{E}_{\mathrm{rel}}(0) \leq -\frac{1}{\eps^2}\int_0^\tau\overline{\rho|u - U|^2}\diff x\diff t - \int_0^\tau\int_{\T^d}\frac{e}{r}\left(\overline{\rho u} - \overline{\rho U}\right)\diff x\diff t\\
            &- \frac{1}{\eps}\int_0^\tau\int_{\T^d}\DIV_x U \,\overline{p(\rho|r)}\diff x\diff t - \frac{1}{\eps}\int_0^\tau \int_{\T^d}\nabla_x U:\overline{\rho (u - U)\otimes (u-U)}\diff x\diff t\\
            &-\frac{1}{\eps}\int_0^\tau\int_{\T^d}\left[b\,U(M_{\overline{\rho}}\,\overline{\nabla_x\Phi_{\overline{\rho}}} - M_r\,\nabla_x\Phi_r) + b\nabla_xU:\left(\overline{\nabla_x\Phi_{\overline{\rho}}\otimes \nabla_x\Phi_{\overline{\rho}}} - \nabla_x\Phi_r\otimes \nabla_x\Phi_r\right)\right]\diff x\diff t\\
	        &+\frac{1}{\eps}\int_0^\tau\int_{\T^d}\frac{b}{2}\DIV_xU\left(\overline{|\nabla_x\Phi_{\overline{\rho}}|^2} - |\nabla_x\Phi_r|^2\right)\diff x\diff t-\frac{1}{\eps}\int_0^\tau\int_{\T^d}\frac{c}{2}\,\DIV_xU(\overline{|\nabla_x\rho|^2}-|\nabla_x r|^2)\diff x\diff t\\
	        & - \frac{1}{\eps}\int_0^\tau\int_{\T^d}\left[c\,\nabla_x(\DIV_xU)(\overline{\rho\nabla_x\rho} - r\nabla_x r) + c\,\nabla_xU :(\overline{\nabla_x\rho\otimes\nabla_x\rho} - \nabla_x r\otimes \nabla_x r)\right]\diff x\diff t\\
	        &+\int_0^\tau\int_{\T^d}\p_t(b\,\Phi_r + c\Delta_x r)(\overline{\rho} - r)\diff x\diff t + \frac{1}{\eps}\int_0^\tau\int_{\T^d}\nabla_x(b\,\Phi_r + c\,\Delta_x r)U(\overline{\rho} - r)\diff x\diff t.
        \end{split}
    \end{equation}
    Let us first handle the terms related the Euler-Poisson equation. Notice that since $(r, \Phi_r)$ is a strong solution to $\eqref{eq:CHKS}_2$ and $(r, U)$ is a strong solution to the continuity equation, then using the distributional formulation \eqref{eq:poissonmvs} for $(\overline{\rho}, \overline{\nabla_x\Phi_{\overline{\rho}}})$, we get
    \begin{equation}\label{eq:reductionpoisson4}
    \begin{split}
            &\int_0^\tau\int_{\T^d}(r - M_r)\overline{\nabla_x\Phi_{\overline{\rho}}}\cdot U \diff x\diff t + \int_0^\tau\int_{\T^d}(\overline{\rho} - M_{\overline{\rho}})\nabla_x\Phi_r\cdot U\diff x\diff t\\
            &=-\int_0^\tau\int_{\T^d}\Delta_x\Phi_r\,\overline{\nabla_x\Phi_{\overline{\rho}}}\cdot U \diff x\diff t + \int_0^\tau\int_{\T^d}\nabla_x\Phi_{\overline{\rho}}\cdot\nabla_x(\nabla_x\Phi_r\cdot U)\diff x\diff t\\
            &= -\int_0^\tau\int_{\T^d}\Delta_x\Phi_r\,\overline{\nabla_x\Phi_{\overline{\rho}}}\cdot U \diff x\diff t + \int_0^\tau\int_{\T^d}\nabla_x\Phi_r\otimes\nabla_x\Phi_{\overline{\rho}}:\nabla_x U\diff x\diff t\\
            &+ \int_0^\tau\int_{\T^d}\nabla_x\Phi_{\overline{\rho}}\cdot D^2_x\Phi_r\cdot U\diff x\diff t = \int_0^\tau\int_{\T^d}\nabla_x\Phi_{r}\cdot\nabla_x(\nabla_x\Phi_{\overline{\rho}}\cdot U)\diff x\diff t\\
            &+\int_0^\tau\int_{\T^d}\nabla_x\Phi_r\otimes\nabla_x\Phi_{\overline{\rho}}:\nabla_x U\diff x\diff t +
             \int_0^\tau\int_{\T^d}\nabla_x\Phi_{\overline{\rho}}\cdot D^2_x\Phi_r\cdot U\diff x\diff t \\
            &=\int_0^\tau\int_{\T^d}\overline{\nabla_x\Phi_{\overline{\rho}}}\otimes\nabla_x\Phi_r : \nabla_x U\diff x\diff t + \int_0^\tau\int_{\T^d}\nabla_x\Phi_{r}\otimes\overline{\nabla_x\Phi_{\overline{\rho}}} : \nabla_x U\diff x\diff t \\
            & + \int_0^\tau\int_{\T^d}\nabla_x\Phi_{r}\cdot D^2_x\Phi_{\overline{\rho}}\cdot U\diff x\diff t + \int_0^\tau\int_{\T^d}\nabla_x\Phi_{\overline{\rho}}\cdot D^2_x\Phi_r\cdot U\diff x\diff t \\
            &  = -\int_0^\tau\int_{\T^d}\overline{\nabla_x\Phi_{\overline{\rho}}}\cdot\nabla_x\Phi_r\,\DIV_x U\diff x\diff t \\
            &+ \int_0^\tau\int_{\T^d}\overline{\nabla_x\Phi_{\overline{\rho}}}\otimes\nabla_x\Phi_r : \nabla_x U\diff x\diff t + \int_0^\tau\int_{\T^d}\nabla_x\Phi_{r}\otimes\overline{\nabla_x\Phi_{\overline{\rho}}} : \nabla_x U\diff x\diff t.
    \end{split}
    \end{equation}
    From the properties of the Poisson's equation (here, $\Gamma$ denotes the Newtonian potential) and the fact that $M_{\overline{\rho}}$ and $M_r$ are independent of time, we conclude that
    \begin{align*}
        &\int_0^\tau\int_{\T^d}(\Gamma*\p_t r) (\overline{\rho} - r)\diff x\diff t = \int_0^\tau\int_{\T^d}\Gamma* \p_t r \,\left((\overline{\rho} - M_{\overline{\rho}}) - (r - M_r)\right)\diff x\diff t \\
        & + \int_0^\tau\int_{\T^d}\Gamma* \p_t r \,\left(M_{\overline{\rho}} - M_r\right)\diff x\diff t = \int_0^\tau\int_{\T^d}\p_t r\,(\Phi_{\overline{\rho}} - \Phi_r)\diff x\diff t\\
        &+ \int_{\T^d} (\Gamma*r) \,\left( M_{\overline{\rho}} - M_r\right)\diff x\Biggr|_0^\tau -\int_0^\tau\int_{\T^d} (\Gamma*r) \,\p_t\left( M_{\overline{\rho}} - M_r\right)\diff x\diff t\\
        & = \int_0^\tau\int_{\T^d}\p_t r\,(\Phi_{\overline{\rho}} - \Phi_r)\diff x\diff t+ \int_{\T^d} (\Gamma*r) \,\left( M_{\overline{\rho}} - M_r\right)\diff x\Biggr|_0^\tau.
    \end{align*}
    Calculating further 
    \begin{align*}
        &\int_{\T^d} (\Gamma*r) \,\left( M_{\overline{\rho}} - M_r\right)\diff x\Biggr|_0^\tau\\
        &= \left( M_{\overline{\rho}} - M_r\right) \left(\int_{\T^d} (\Gamma*r)(\tau, x) \,\diff x - \int_{\T^d} (\Gamma*r)(0, x) \,\diff x\right)\\
        &= \left( M_{\overline{\rho}} - M_r\right) \left(\int_{\T^d} \int_{x - \T^d}\Gamma(y)\, r(\tau, x - y) \diff y\diff x - \int_{\T^d} \int_{x - \T^d}\Gamma(y)\, r(0, x - y) \diff y\diff x\right)\\
        &= \left( M_{\overline{\rho}} - M_r\right) \left(\int_{\T^d} \int_{y + \T^d}\Gamma(y)\, r(\tau, x - y) \diff x\diff y - \int_{\T^d} \int_{y + \T^d}\Gamma(y)\, r(0, x - y) \diff x\diff y\right)\\
        &= \left( M_{\overline{\rho}} - M_r\right) \left(\int_{\T^d}\Gamma(y)M_r\diff y - \int_{\T^d}\Gamma(y)M_r\diff y\right)\\
        &= 0.
    \end{align*}
    Hence
    \begin{equation}\label{eq:reductionpoisson5}
        \begin{split}
            &-\int_0^\tau\int_{\T^d}\,\p_t\,\Phi_r(\overline{\rho} - r)\diff x\diff t - \frac{1}{\eps}\int_0^\tau\int_{\T^d}\nabla_x\,\Phi_r\cdot U(\overline{\rho} - r)\diff x\diff t\\
            &= -\int_0^\tau\int_{\T^d}(\Gamma*\p_t r) (\overline{\rho} - r)\diff x\diff t - \frac{1}{\eps}\int_0^\tau\int_{\T^d}\nabla_x\,\Phi_r\cdot U(\overline{\rho} - r)\diff x\diff t\\
            &= -\int_0^\tau\int_{\T^d}\p_t r(\Phi_{\overline{\rho}} - \Phi_r)\diff x\diff t - \frac{1}{\eps}\int_0^\tau\int_{\T^d}\nabla_x\,\Phi_r\cdot U(\overline{\rho} - r)\diff x\diff t\\
            & = \frac{1}{\eps}\int_0^\tau\int_{\T^d}\DIV_x(rU)(\Phi_{\overline{\rho}} - \Phi_r)\diff x\diff t - \frac{1}{\eps}\int_0^\tau\int_{\T^d}\nabla_x\,\Phi_r\cdot U(\overline{\rho} - r)\diff x\diff t\\
            & = -\frac{1}{\eps}\int_0^\tau\int_{\T^d}\overline{\rho}\overline{\nabla_x\Phi_{\overline{\rho}}}\cdot r\, U\diff x\diff t - \frac{1}{\eps}\int_0^\tau\int_{\T^d}\overline{\rho}\,U\cdot\nabla_x\,\Phi_r\diff x\diff t\\
            &+ \frac{2}{\eps}\int_0^\tau\int_{\T^d}r\,U\cdot\nabla_x\,\Phi_r\diff x\diff t.
        \end{split}
    \end{equation}
    With the use of both \eqref{eq:reductionpoisson5} and \eqref{eq:reductionpoisson4} and some basic calculations 
    \begin{equation}\label{eq:reductionpoisson3}
    \begin{split}
            &\frac{1}{\eps}\int_0^\tau\int_{\T^d}\left[b\,U(M_{\overline{\rho}}\,\overline{\nabla_x\Phi_{\overline{\rho}}} - M_r\,\nabla_x\Phi_r) + b\nabla_xU:\left(\overline{\nabla_x\Phi_{\overline{\rho}}\otimes \nabla_x\Phi_{\overline{\rho}}} - \nabla_x\Phi_r\otimes \nabla_x\Phi_r\right)\right]\diff x\diff t\\
	        &-\frac{1}{\eps}\int_0^\tau\int_{\T^d}\frac{b}{2}\DIV_xU\left(\overline{|\nabla_x\Phi_{\overline{\rho}}|^2} - |\nabla_x\Phi_r|^2\right)\diff x\diff t -\int_0^\tau\int_{\T^d}b\,\p_t\,\Phi_r(\overline{\rho} - r)\diff x\diff t\\
	        &- \frac{1}{\eps}\int_0^\tau\int_{\T^d}\nabla_x(b\,\Phi_r)\cdot U(\overline{\rho} - r)\diff x\diff t = - \frac{b}{2\eps}\int_0^\tau\int_{\T^d}\overline{\left|\nabla_x\Phi_{\overline{\rho}} - \nabla_x\Phi_r\right|^2}\cdot\DIV_xU\diff x\diff t\\
            &+ \frac{b}{\eps}\int_0^\tau\int_{\T^d}\overline{\left(\nabla_x\Phi_{\overline{\rho}} - \nabla_x\Phi_r\right) \otimes \left(\nabla_x\Phi_{\overline{\rho}} - \nabla_x\Phi_r\right)} : \nabla_x U\diff x\diff t\\
            &+\frac{b}{\eps}\int_0^\tau\int_{\T^d}(M_{\overline{\rho}} - M_r)(\overline{\nabla_x\Phi_{\overline{\rho}}} - \nabla_x\Phi_r)\,U\diff x\diff t.
    \end{split}
    \end{equation}
   \noindent Now we may move into the terms from Euler--Korteweg. As $(r, U)$ is a strong solution to the continuity equation (see \eqref{strongeuler-poisson}), we may perform the following calculations 
   \begin{equation}\label{eq:reductionkorteweg2}
   \begin{aligned}
            &-\int_0^\tau\int_{\T^d}\frac{1}{2}\,\DIV_xU(\overline{|\nabla_x\rho|^2}-|\nabla_x r|^2)\diff x\diff t + \nabla_x(\DIV_xU)(\overline{\rho\nabla_x\rho} - r\nabla_x r)\diff x\diff t\\
            & - \int_0^\tau\int_{\T^d}\nabla_xU :(\overline{\nabla_x\rho\otimes\nabla_x\rho} - \nabla_x r\otimes \nabla_x r)\diff x\diff t =  -\frac{1}{2}\int_0^\tau\int_{\T^d}\overline{|\nabla_x\rho - \nabla_x r|^2}\cdot \DIV_x U\diff x\diff t\\
            &+ \frac{1}{2}\int_0^\tau\int_{\T^d}|\nabla_x r|^2\cdot\DIV_x U\diff x\diff t - \int_0^\tau\int_{\T^d}\overline{\nabla_x\rho}\cdot\nabla_x r\,\DIV_x U\diff x\diff t\\
            &- \int_0^\tau\int_{\T^d}(\overline{\nabla_x\rho - \nabla_x r)\otimes(\nabla_x\rho - \nabla_x r)} : \nabla_x U\diff x\diff t + 2\int_0^\tau\int_{\T^d}\nabla_x r\otimes\nabla_x r : \nabla_x U\diff x\diff t\\
            &- \int_0^\tau\int_{\T^d}\nabla_x r\otimes \overline{\nabla_x\rho} : \nabla_x U\diff x\diff t - \int_0^\tau\int_{\T^d} \overline{\nabla_x\rho} \otimes \nabla_x r: \nabla_x U\diff x\diff t\\
            &-\int_0^\tau\int_{\T^d}\nabla_x(\DIV_xU)(\overline{\rho\nabla_x\rho} - r\nabla_x r)\diff x\diff t.
    \end{aligned}
    \end{equation}
    Similarly, due to the Sobolev regularity (see \eqref{def:sobolevregularitydensity} and Definition \ref{def:strongsolutions}) of the densities, we calculate the next terms 
    \begin{equation}\label{eq:reductionkorteweg3}
    \begin{aligned}
            &\int_0^\tau\int_{\T^d}\p_t(\Delta_x r)(\overline{\rho} - r)\diff x\diff t + \frac{1}{\eps}\int_0^\tau\int_{\T^d}U(\overline{\rho} - r)\nabla_x(\Delta_x r)\diff x\diff t\\
            &= \frac{1}{\eps}\int_0^\tau\int_{\T^d}\nabla_x(\DIV_x(rU))\cdot\nabla_x(\overline{\rho} - r)\diff x\diff t + \frac{1}{\eps}\int_0^\tau\int_{\T^d}U(\overline{\rho} - r)\nabla_x(\Delta_x r)\diff x\diff t\\
            & = \frac{1}{\eps}\int_0^\tau\int_{\T^d}\nabla_x(\DIV_x U)\,r\,\nabla_x(\overline{\rho} - r) + \DIV_x U\,\nabla_x r\cdot\nabla_x(\overline{\rho} - r) + \nabla_x(U\,\nabla_x r)\cdot\nabla_x(\overline{\rho} - r)\diff x\diff t\\
            &+ \frac{1}{\eps}\int_0^\tau\int_{\T^d}\nabla_x(\DIV_x U)\,(\overline{\rho} - r)\,\nabla_x r + \DIV_x U\,\nabla_xr\cdot\nabla_x(\overline{\rho} - r) - (U\,\nabla_x(\overline{\rho} - r))\cdot\Delta_x r\diff x\diff t\\
            & = \frac{1}{\eps}\int_0^\tau\int_{\T^d}\nabla_x(\DIV_x U)(\overline{\rho}\,\nabla_x r + \nabla_x\overline{\rho}\,r - 2\,r\,\nabla_x r)\diff x\diff t + \frac{1}{\eps}\int_0^\tau\int_{\T^d}\nabla_x\overline{\rho}\cdot\nabla_x r\, \DIV_x U\diff x\diff t\\
            &  + \frac{1}{\eps}\int_0^\tau\int_{\T^d}\DIV_x U\,\nabla_x r\cdot\nabla_x(\overline{\rho} - r) + \nabla_x(U\,\nabla_x r)\cdot\nabla_x(\overline{\rho} - r) - (U\,\nabla_x(\overline{\rho} - r))\cdot\Delta_x r \diff x\diff t\\
            &- \frac{1}{\eps}\int_0^\tau\int_{\T^d}\DIV_x U |\nabla_x r|^2\diff x\diff t = - \frac{1}{\eps}\int_0^\tau\int_{\T^d}\DIV_x U |\nabla_x r|^2\diff x\diff t\\
            & +\frac{1}{\eps}\int_0^\tau\int_{\T^d}\nabla_x U : (\overline{\nabla_x\rho}\otimes\nabla_x r + \nabla_x r \otimes\overline{\nabla_x\rho} - 2\nabla_x r \otimes \nabla_x r)\diff x\diff t\\
            & + \frac{1}{\eps}\int_0^\tau\int_{\T^d}\nabla_x(\DIV_x U)(\overline{\rho}\,\nabla_x r + \overline{\nabla_x\rho}\,r - 2\,r\,\nabla_x r)\diff x\diff t + \frac{1}{\eps}\int_0^\tau\int_{\T^d}\overline{\nabla_x\rho}\cdot\nabla_x r\, \DIV_x U\diff x\diff t.
    \end{aligned}
    \end{equation}
    Thus, combining both \eqref{eq:reductionkorteweg2} and \eqref{eq:reductionkorteweg3} 
\begin{equation}\label{eq:reductionkorteweg1}
    \begin{split}
            &-\frac{c}{\eps}\int_0^\tau\int_{\T^d}\frac{1}{2}\,\DIV_xU(\overline{|\nabla_x\rho|^2}-|\nabla_x r|^2)\diff x\diff t + \nabla_x(\DIV_xU)(\overline{\rho\nabla_x\rho} - r\nabla_x r)\diff x\diff t\\
            & - \frac{c}{\eps}\int_0^\tau\int_{\T^d}\nabla_xU :(\overline{\nabla_x\rho\otimes\nabla_x\rho} - \nabla_x r\otimes \nabla_x r)\diff x\diff t + \int_0^\tau\int_{\T^d}\p_t(\Delta_x r)(\overline{\rho} - r)\diff x\diff t\\
            &+ \frac{1}{\eps}\int_0^\tau\int_{\T^d}U(\overline{\rho} - r)\nabla_x(\Delta_x r)\diff x\diff t =-\frac{c}{\eps}\int_0^\tau\int_{\T^d}\nabla_x(\DIV_xU)\overline{(\rho - r)\nabla_x(\rho - r)}\diff x\diff t \\
            &- \frac{c}{\eps}\int_0^\tau\int_{\T^d}(\overline{\nabla_x\rho - \nabla_x r)\otimes(\nabla_x\rho - \nabla_x r)} : \nabla_x U\diff x\diff t-\frac{c}{2\eps}\int_0^\tau\int_{\T^d}\overline{|\nabla_x\rho - \nabla_x r|^2}\cdot \DIV_x U\diff x\diff t.
    \end{split}
\end{equation}
Using both \eqref{eq:reductionpoisson3} and \eqref{eq:reductionkorteweg1} in \eqref{ineq:relative3} we obtain 
\begin{equation}\label{ineq:relative4}
        \begin{split}
            & \mathcal{E}_{\mathrm{rel}}(\tau) - \mathcal{E}_{\mathrm{rel}}(0) \leq -\frac{1}{\eps^2}\int_0^\tau\overline{\rho|u - U|^2}\diff x\diff t - \int_0^\tau\int_{\T^d}\frac{e}{r}\left(\overline{\rho u} - \overline{\rho U}\right)\diff x\diff t\\
            &- \frac{1}{\eps}\int_0^\tau\int_{\T^d}\DIV_x U \,\overline{p(\rho|r)}\diff x\diff t - \frac{1}{\eps}\int_0^\tau \int_{\T^d}\nabla_x U:\overline{\rho (u - U)\otimes (u-U)}\diff x\diff t\\
            &- \frac{b}{\eps}\int_0^\tau\int_{\T^d}\overline{\left(\nabla_x\Phi_{\overline{\rho}} - \nabla_x\Phi_r\right) \otimes \left(\nabla_x\Phi_{\overline{\rho}} - \nabla_x\Phi_r\right)} : \nabla_x U\diff x\diff t\\
            &+ \frac{b}{2\eps}\int_0^\tau\int_{\T^d}\overline{\left|\nabla_x\Phi_{\overline{\rho}} - \nabla_x\Phi_r\right|^2}\cdot\DIV_xU\diff x\diff t-\frac{c}{\eps}\int_0^\tau\int_{\T^d}\nabla_x(\DIV_xU)\overline{(\rho - r)\nabla_x(\rho - r)}\diff x\diff t \\
            &- \frac{c}{\eps}\int_0^\tau\int_{\T^d}(\overline{\nabla_x\rho - \nabla_x r)\otimes(\nabla_x\rho - \nabla_x r)} : \nabla_x U\diff x\diff t-\frac{c}{2\eps}\int_0^\tau\int_{\T^d}\overline{|\nabla_x\rho - \nabla_x r|^2}\cdot \DIV_x U\diff x\diff t\\
            &-\frac{b}{\eps}\int_0^\tau\int_{\T^d}(M_{\overline{\rho}} - M_r)(\overline{\nabla_x\Phi_{\overline{\rho}}} - \nabla_x\Phi_r)\, U\diff x\diff t.
        \end{split}
    \end{equation}
Finally, we have a good enough form of the right-hand side to start estimating the terms separately. Notice that, since $r$ is a strong solution from Definition \ref{def:strongsolutions}, we know that $\frac{1}{\eps}\|U\|_\infty$, $\frac{1}{\eps}\|\nabla_x U\|_\infty$, $\frac{c}{\eps}\|D^2_x U\|_\infty$, all belong to $O(1)$. Thus, the bounds on the terms
\begin{align*}
&\frac{1}{\eps}\int_0^\tau \int_{\T^d}\nabla_x U:\overline{\rho (u - U)\otimes (u-U)}\diff x\diff t, \quad \frac{b}{2\eps}\int_0^\tau\int_{\T^d}\overline{\left|\nabla_x\Phi_{\overline{\rho}} - \nabla_x\Phi_r\right|^2}\cdot\DIV_xU\diff x\diff t\\
&\frac{c}{\eps}\int_0^\tau\int_{\T^d}(\overline{\nabla_x\rho - \nabla_x r) \otimes(\nabla_x\rho - \nabla_x r)} : \nabla_x U\diff x\diff t, \quad \frac{c}{2\eps}\int_0^\tau\int_{\T^d}\overline{|\nabla_x\rho - \nabla_x r|^2}\cdot \DIV_x U\diff x\diff t\\
&\frac{b}{\eps}\int_0^\tau\int_{\T^d}\overline{\left(\nabla_x\Phi_{\overline{\rho}} - \nabla_x\Phi_r\right) \otimes \left(\nabla_x\Phi_{\overline{\rho}} - \nabla_x\Phi_r\right)} : \nabla_x U\diff x\diff t
\end{align*}
by $\int_0^\tau \mathcal{E}_{\text{rel}}(t)\diff t$ are obvious. Here, we want to emphasize again, that if $c = 0$, then for the following argument to hold it is enough for $r$, to belong to $C^2((0, T)\times\T^d)$, which is guaranteed from Appendix \ref{appendixA} and \cite{cieslak2008finite}. To handle the next term we see that (look at \eqref{lem:definitionofh} and \eqref{relativepressuredef})
$$
\overline{p(\rho|r)} = \frac{1}{\gamma - 1}\overline{h(\rho|r)} \lesssim \overline{h(\rho|r)}.
$$
Hence, also
$$
\left|\frac{1}{\eps}\int_0^\tau\int_{\T^d}\DIV_x U \,\overline{p(\rho|r)}\diff x\diff t\right| \lesssim \int_0^\tau \mathcal{E}_{\text{rel}}(t)\diff t.
$$
By Young's inequality
$$
\left|\frac{c}{\eps}\int_0^\tau\int_{\T^d}\nabla_x(\DIV_xU)\overline{(\rho - r)\nabla_x(\rho - r)}\diff x\diff t\right| \lesssim \int_0^\tau \int_{\T^d}\overline{|\rho - r|^2} + \overline{|\nabla_x\rho - \nabla_x r|^2}\diff x\diff t,
$$
and with the use of Lemma \ref{ineq:inequalityforhrhor} (remember that we assume $\gamma \geq 2$)
$$
\int_0^\tau \int_{\T^d}\overline{|\rho - r|^2} + \overline{|\nabla_x\rho - \nabla_x r|^2}\diff x\diff t \lesssim \int_0^\tau\int_{\T^d}\overline{h(\rho|r)} + \overline{|\nabla_x\rho - \nabla_x r|^2}\diff x\diff t \lesssim \int_0^\tau \mathcal{E}_{\text{rel}}(t)\diff t.
$$
Similarly 
\begin{align*}
\left|\frac{b}{\eps}\int_0^\tau\int_{\T^d}(M_{\overline{\rho}} - M_r)(\overline{\nabla_x\Phi_{\overline{\rho}}} - \nabla_x\Phi_r)\, U\diff x\diff t\right|&\lesssim \int_0^\tau \int_{\T^d}\overline{|\rho - r|^2} + \overline{|\nabla_x\Phi_{\overline{\rho}} - \nabla_x\Phi_r|^2}\diff x\diff t\\
&\lesssim \int_0^\tau \mathcal{E}_{\text{rel}}(t)\diff t.
\end{align*}
The last term left to bound in the right-hand side of \eqref{ineq:relative4} is the trickiest, therefore let us argue more carefully. Note that by Young's inequality and Proposition \ref{app:concetrationmeasuresbounding}
\begin{align*}
&\int_0^\tau\int_{\T^d}\frac{e}{r}\left(\overline{\rho u} - \overline{\rho U}\right)\diff x\diff t = \int_0^\tau\int_{\T^d}\frac{e}{r}\langle\nu_{t,x}, s(v - U)\rangle + m^{\rho u}\diff x\diff t\\
&\leq \int_0^\tau\int_{\T^d}\frac{\eps^2}{2}\left(\left|\frac{e}{r}\right|^2\langle\nu_{t,x}, s\rangle\right) + \frac{1}{2\eps^2}\left(\langle\nu_{t,x}, s|v - U|^2\rangle + m^{\rho |u|^2}\right)\diff x\diff t\\
&=  \frac{\eps^2}{2}\int_0^\tau\int_{\T^d}\left|\frac{e}{r}\right|^2\overline{\rho} + \frac{1}{2\eps^2}\int_0^\tau\int_{\T^d}\overline{\rho|u - U|^2} \diff x\diff t.
\end{align*}
As $e = O(\eps)$ we obtain
\begin{align*}
    \frac{\eps^2}{2}\int_0^\tau\int_{\T^d}\left|\frac{e}{r}\right|^2\overline{\rho} + \frac{1}{2\eps^2}\int_0^\tau\int_{\T^d}\overline{\rho|u - U|^2} \diff x\diff t \leq O(\eps^4) + \frac{1}{2\eps^2}\int_0^\tau\int_{\T^d}\overline{\rho|u - U|^2} \diff x\diff t.
\end{align*}
It is easy to check that the proven bounds, together with \eqref{ineq:relative4} end the proof of the proposition.
\end{proof}
\noindent Having Theorem \ref{prop:relentropyinequality} the proof of the main one is a formality.
\begin{proof}\textup{\textbf{(Of Theorem \ref{theo:poissonlimit})}}\\
By the Gr\"{o}nwall's inequality and Theorem \ref{prop:relentropyinequality} we obtain
$$
\mathcal{E}^\varepsilon_{\text{rel}}(\tau) \leq (O(\eps^4) + \mathcal{E}^\varepsilon_{\text{rel}}(0))e^T .
$$
Since we already assume 
$$
\mathcal{E}^\varepsilon_{\text{rel}}(0) \longrightarrow 0\quad \text{ as }\varepsilon\to 0^+,
$$
therefore
$$
\mathcal{E}^\varepsilon_{\text{rel}}(\tau) \longrightarrow 0.
$$
Going back to the inequality in Theorem \ref{prop:relentropyinequality} we obtain
$$
\frac{1}{2\eps^2}\int_0^\tau\int_{\T^d}\overline{\rho|u - U|^2} \diff x\diff t \longrightarrow 0
$$
and the conclusion follows.
\end{proof} 

\begin{rem}
In case of $c = 0$ one could show versions of both Theorem \ref{prop:relentropyinequality} and Theorem \ref{theo:poissonlimit} with the assumption that either
\begin{itemize}
    \item $\frac{2}{K} > b > 0$, $\gamma > 2 - \frac{2}{d}$ (where $K$ is taken from Lemma \ref{lem:boundfornegativebees}) or
    \item $b\leq 0$, $\gamma > 1$,
\end{itemize}
but it would result in the need of a strong solution in $C^{3, 1}(\T^d\times [0, T]))$, whose existence is unknown. See the proof of Theorem 3.9 in \cite{lattanzio2017fromgas} for the details.
\end{rem}

\appendix

\section{Strong solutions to chemo-repulsive Keller-Segel equations.}\label{appendixA}

\begin{thm}
Assume that $\rho_0\in C^{2 + \beta}(\T^d)$ for some $\beta\in (0, 1)$ and
$$
0 < \rho_* \leq \rho\leq \rho^* < +\infty.
$$
Then, there exists a constant $\alpha(\rho_*, \rho^*, \beta)\in (0, 1)$ and a function $\rho \in C^{2 + \alpha, 1+ \frac{\alpha}{2}}(\T^d\times [0, T])$ such that
\begin{equation}\label{strongsolutions:theequation}
    \begin{split}
        \partial_t\rho - \operatorname{div}_x(\nabla_x(\rho^\gamma) + \rho\nabla_x \Phi_{\rho})&=0,\\
		-\Delta_x\Phi_{\rho}&=\rho-M_{\rho}.
    \end{split}
\end{equation}
\end{thm}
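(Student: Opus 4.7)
The plan is a Schauder fixed-point argument based on linearization, combined with a maximum-principle argument that propagates the pointwise bounds $\rho_*\le\rho\le\rho^*$ from the initial data. For any $\tilde\rho$ in a closed, convex subset of $C^{\alpha,\alpha/2}(\T^d\times[0,T])$ satisfying $\tfrac12\rho_*\le\tilde\rho\le 2\rho^*$, I would first solve the elliptic problem $-\Delta_x\Phi_{\tilde\rho}=\tilde\rho-M_{\tilde\rho}$ on the torus, which has a unique mean-zero solution with $\Phi_{\tilde\rho}\in C^{2+\alpha,\alpha/2}$ by standard elliptic Schauder estimates. Plugging this in and freezing $\tilde\rho$ in the coefficient $\gamma\tilde\rho^{\gamma-1}$ gives the linear, uniformly parabolic problem
\begin{equation*}
\partial_t\rho-\DIV_x(\gamma\tilde\rho^{\gamma-1}\nabla_x\rho)-\nabla_x\Phi_{\tilde\rho}\cdot\nabla_x\rho+\rho(\tilde\rho-M_{\tilde\rho})=0,\qquad \rho(0,\cdot)=\rho_0,
\end{equation*}
which, by classical Ladyzhenskaya-Solonnikov-Uralceva parabolic Schauder theory, admits a unique classical solution $\rho\in C^{2+\alpha,1+\alpha/2}$ with a Schauder bound controlling $\|\rho\|_{C^{2+\alpha,1+\alpha/2}}$ by $\|\rho_0\|_{C^{2+\beta}}$ and the $C^{\alpha,\alpha/2}$ norm of the coefficients.

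Setting $T(\tilde\rho):=\rho$, I would apply Schauder's fixed-point theorem. Compactness of $T$ is immediate from the compact embedding $C^{2+\alpha,1+\alpha/2}\hookrightarrow C^{\alpha,\alpha/2}$, and continuity is a standard perturbation argument applied to the difference equation for $T(\tilde\rho_1)-T(\tilde\rho_2)$. Invariance of the chosen set is the delicate point: I would carry it out on a short interval $[0,T^*]$ by using the Schauder bound to control $\|\rho\|_{C^{\alpha,\alpha/2}}$ and the linear maximum principle together with the smallness of $T^*$ to maintain the pointwise constraints; the solution is then continued in time using the a priori $L^\infty$ bounds derived from the original nonlinear equation.

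Those pointwise bounds come from the maximum principle applied to the nonlinear equation rewritten as
\begin{equation*}
\partial_t\rho=\Delta_x\rho^\gamma+\nabla_x\rho\cdot\nabla_x\Phi_\rho-\rho(\rho-M_\rho).
\end{equation*}
At an interior maximum $(t^*,x^*)$ of $\rho$ one has $\nabla_x\rho=0$ and $\Delta_x\rho^\gamma=\gamma\rho^{\gamma-1}\Delta_x\rho\le 0$, hence $0\le\partial_t\rho\le-\rho(\rho-M_\rho)$, forcing $\rho(t^*,x^*)\le M_\rho$. Since the overall supremum is attained either at $t=0$ or at such an interior point, $\sup\rho\le\|\rho_0\|_\infty\le\rho^*$; the symmetric argument at an interior minimum, using $\Delta_x\rho^\gamma\ge 0$ there, gives $\inf\rho\ge\rho_*$. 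In particular $\gamma\rho^{\gamma-1}$ stays uniformly bounded above and away from zero, so the equation remains uniformly parabolic on all of $[0,T]\times\T^d$ and the solution can indeed be continued up to any $T>0$.

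The main obstacle I anticipate is reconciling the linearized fixed-point scheme with these nonlinear $L^\infty$ bounds: the frozen-coefficient problem does not directly inherit them, which is why the short-time/continuation split above is needed (alternatively, one can truncate $\tilde\rho^{\gamma-1}$ and $\tilde\rho-M_{\tilde\rho}$ inside the iteration and verify a posteriori, through the nonlinear maximum principle at the fixed point, that the truncation is never active). The nonlocality of $\Phi_\rho$ is rather mild on $\T^d$ (two derivatives of regularity are gained from the Poisson equation) and the final bootstrap of Hölder regularity is routine once the uniform ellipticity $\rho\ge\rho_*>0$ is in place.
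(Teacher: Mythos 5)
Your overall architecture (Schauder fixed point, maximum-principle $L^\infty$ bounds, continuation in time) is the same as the paper's, but the linearization step does not close as you state it. You freeze the diffusion coefficient $\gamma\tilde\rho^{\gamma-1}$ with $\tilde\rho$ only in $C^{\alpha,\alpha/2}$ and then invoke parabolic Schauder theory to get a classical solution in $C^{2+\alpha,1+\alpha/2}$ with a bound in terms of the $C^{\alpha,\alpha/2}$ norm of the coefficients. That estimate is not available: the equation is in divergence form, and to obtain $C^{2+\alpha}$ regularity one must either pass to non-divergence form, which requires $\nabla_x(\gamma\tilde\rho^{\gamma-1})$, i.e. $\tilde\rho\in C^{1+\alpha}$, or use divergence-form Schauder/De Giorgi--Nash theory, which with a merely H\"older principal coefficient yields only $C^{1+\alpha,(1+\alpha)/2}$. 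So your map $T$ does not land in $C^{2+\alpha,1+\alpha/2}$, and the compactness and invariance arguments built on that bound do not follow. This is precisely why the paper freezes only the nonlocal potential $\Phi_w$ (which gains two derivatives from the Poisson equation, so $w\in C^{\alpha,\alpha/2}$ already gives $\nabla_x\Phi_w\in C^{1+\alpha}$ in $x$) and keeps the diffusion quasilinear in $\rho$ itself, with the truncated coefficient $f$ guaranteeing uniform parabolicity; existence then comes from Lieberman's quasilinear theorem, and the key H\"older estimate $|\rho|_{\alpha,\alpha/2}\le C(\beta,\eta,M,\rho^*)$ is \emph{independent} of $|w|_{\alpha,\alpha/2}$, which is what allows the seminorm bound $N$ in the invariant set to be chosen non-circularly. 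Your plan to secure invariance "using the Schauder bound" feeds an $N$-dependent constant back into the definition of the set and is only rescued by a short-time smallness argument you merely sketch. The scheme can be repaired (iterate in $C^{1+\alpha,(1+\alpha)/2}$ and bootstrap at the fixed point, or adopt the paper's quasilinear map), but as written the central solvability-plus-estimate step fails.

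A second, smaller gap is the lower bound: at a space-time minimum with $t^*>0$ your inequality only gives $\rho(\rho-M_\rho)\ge 0$ there, which is also satisfied by $\rho=0$, so $\inf\rho\ge\rho_*$ does not follow directly from "the infimum is attained at $t=0$ or at an interior point". You need a first-crossing-time or ODE-comparison argument for $m(t)=\min_x\rho(\cdot,t)$ (the paper runs exactly such a differential inequality, obtaining $\min_x\rho\ge\rho_*e^{-(M+\rho^*)T}$); with that patch your maximum-side argument is fine and in fact gives a cleaner upper bound than the paper's exponential one.
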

\begin{proof}
We will proceed by using a Schauder's fixed point argument. For now let us fix time $\tau = \frac{1}{2\rho^*e}$, which without loss of generality is less than $T$. Let
\begin{align}
    X := \{w\in C^{\alpha, \frac{\alpha}{2}}(\T^d\times [0, \tau])\quad :\quad \eta\leq w\leq M,\quad |w|_{\alpha, \frac{\alpha}{2}}\leq N\},
\end{align}
where $\alpha\in (0, 1)$, $\eta >0$, $M >0$ and $N > 0$ are constants to be specified later, $X$ is considered with a standard norm of $C^{\alpha, \frac{\alpha}{2}}(\T^d\times [0, \tau])$ space and $|\cdot|_{\alpha, \frac{\alpha}{2}}$ is a standard H\"{o}lder seminorm. Consider a function $f$ defined by
\begin{align*}
    f(\rho)=\left\{\begin{array}{ll}
         \gamma \rho^{\gamma - 1},\quad&\text{ when } \rho \geq \eta,\\
         \text{convex and smooth},\quad&\text{ when } \frac{\eta}{2} \leq \rho\leq \eta,\\
         \frac{\eta}{2},\quad&\text{ when } 0 < \rho \leq \frac{\eta}{2},
    \end{array}
    \right.
\end{align*}
and an equation
\begin{equation}\label{strongsolutions:schauderequation}
    \begin{split}
        \partial_t\rho - \operatorname{div}_x(f(\rho)\nabla_x\rho + \rho\nabla_x \Phi_{w})&=0,\\
		-\Delta_x\Phi_{w}&=w-M_{w},\\
		\int_{\T^d}\Phi_{w}\diff x &= 0.
    \end{split}
\end{equation}
We wish to define an operator $F: X\ni w\longmapsto \rho\in X$, where $\rho$ is a solution to \eqref{strongsolutions:schauderequation}, and show that it satisfies Schauder's fixed point theorem. We need to check that $F$ is well-defined, i.e. $F(w) = \rho$ is a solution to \eqref{strongsolutions:schauderequation}, that $F$ is continuous, maps $X$ to $X$ and $F(X)$ is relatively compact in $X$.\\
\\
\underline{Step 1: F is well-defined, \`{a} priori bounds of the solution.} We can rewrite the first equation of \eqref{strongsolutions:schauderequation} as
\begin{align}\label{strongsolutions:firstequation}
    \p_t\rho - \DIV_x(A(x, t, \rho, \nabla_x\rho)) = 0,
\end{align}
where $A_i(x, t, \rho, \nabla_x\rho) = f(\rho)\frac{\p\rho}{\p x_i} + \rho\frac{\p\Phi_{w}}{\p x_i}$. We wish to use \cite[Chapter XII, Theorem 12.14]{lieberman1996second}. But first, let us notice that by \cite[Chapter 3, Theorem 18]{friedman1964partial} $\Phi_w$ has $C^{2+\alpha}$ regularity with respect to spatial variable. Moreover, for example by \cite[Chapter 4, Lemma 4.2]{gilbarg2001elliptic}, its gradient is given by the convolution of the gradient of a Newtonian potential together with the function $w$, hence it has $C^{\frac{\alpha}{2}}$ regularity in time variable. Now, we shall check assumptions of the mentioned Theorem $12.14$.
\begin{itemize}
    \item
    $$
    \frac{\eta}{2}|\xi|^2\leq f(\rho)|\xi^2| = \sum_{i, j}\frac{\p A_i(x, t, \rho, p))}{\p p_j}\xi_i\xi_j,
    $$
    \item
    $$
    p\,A(x, t, \rho, p) = f(\rho)|p|^2 + \rho\,p\,\nabla_x\Phi_w \geq \frac{\eta}{2}|p|^2 - C\left(\varepsilon|p|^2 + \frac{1}{\varepsilon}|\rho|^2\right)
    $$
    taking $\varepsilon$ small enough so that $\frac{\eta}{2} - C\varepsilon >0$ one gets what is needed,
    \item $|p|^2\,|A_p| + |p||A_\rho| + |A_x| = |f(\rho)|\,|p|^2 + |f'(\rho)|\,|p|^2 + |\nabla_x\Phi_w||p| + |\rho\,\nabla_x^2\Phi_w| = O(|p|^2)$,
    \item $A_x$, $A_{\rho}$, $A_p$ are H\"{o}lder continuous with respect to the $x$, $t$, $\rho$, $p$ variables with exponents $\alpha$, $\frac{\alpha}{2}$, $\alpha$, $\alpha$ respectively.
\end{itemize}
Hence, there exists a unique function $\rho\in C^{2 + \alpha, 1 + \frac{\alpha}{2}}(\T^d\times [0, \tau])$ satisfying \eqref{strongsolutions:schauderequation}. Moreover by \cite[Chapter XII, Theorem 12.10]{lieberman1996second} and \cite[Chapter V, Theorem 1.1]{ladyzhenskaya1968linear} there exists a $\alpha(\beta, \eta, M, \rho^*)$ such that
\begin{align}\label{strongsolutions:holderboundsfirstderivative}
    \|D_{x, t}\rho\|_{C^{\alpha, \frac{\alpha}{2}}}\leq C(\beta, \eta, M, \rho^*, |w|_{\alpha, \frac{\alpha}{2}})
\end{align}
and
\begin{align}\label{strongsolutions:holderbounds}
    |\rho|_{\alpha, \frac{\alpha}{2}}\leq C(\beta, \eta, M, \rho^*).
\end{align}
Combining \eqref{strongsolutions:holderbounds} and \eqref{strongsolutions:holderboundsfirstderivative} one may treat \eqref{strongsolutions:firstequation} as a linear equation and obtain from linear theory (see comments below \cite[Chapter XII, Theorem 12.14]{lieberman1996second} and \cite[Chapter V, Theorem 5.14]{lieberman1996second})
\begin{align}\label{strongsolutions:boundsforarzela}
    \|\rho\|_{C^{2+\alpha, 1+\frac{\alpha}{2}}}\leq C(\beta, \eta, M, \rho^*, |w|_{\alpha, \frac{\alpha}{2}}).
\end{align}
\\
\underline{Step 2: F maps $X$ into $X$.} We already know from the previous step that $\rho\in C^{\alpha, \frac{\alpha}{2}}(\T^d\times [0, \tau])$ and if we fix $N: = C(\beta, \eta, M, \rho^*)$ from \eqref{strongsolutions:holderbounds}, then all that is left to prove is that 
\begin{align}\label{strongsolutions:boundsforrhoschauder}
\eta\leq \rho \leq M.
\end{align}
To this end, let $\max_{x}\rho(x, t) = \rho(x_0, t)$. Then, putting it in \eqref{strongsolutions:firstequation} one obtains
$$
\frac{d}{dt}\max_{x}\rho(x, t) = \max_x\rho(x, t)\,\Delta_x\Phi_w + f(\rho)\Delta_x\rho(x_0, t) \leq  \max_x\rho(x, t)\,\Delta_x\Phi_w \leq 2M\max_x\rho(x, t).
$$
Hence
$$
\max_{x}\rho(x, t) \leq \rho^* \exp(2M\tau).
$$
Proceeding similarly for the minimum we get
$$
\min_x\rho(x, t) \geq \rho_*\exp(-2M\tau).
$$
Setting $M = \rho^*\,e$ and $\eta = \rho_*\,e^{-1}$ and remembering that $\tau = \frac{1}{2\,\rho^*\,e}$ one gets \eqref{strongsolutions:boundsforrhoschauder}.\\
\\
\underline{Step 3: F(X) is relatively compact.} This fact follows from \eqref{strongsolutions:boundsforarzela} and Arzel\'{a}--Ascoli theorem.\\
\\
\underline{Step 4: F is continuous.} Take a sequence $w_n$ and its limit $w$ in $X$. Then, from \eqref{strongsolutions:boundsforarzela} and Arzel\'{a}--Ascoli theorem we know that $\rho_n:=F(w_n)$ is relatively compact in $C^{2, 1}(\T^d\times[0, \tau])$ and $\rho_{n_k} \rightarrow \rho$ for some $\rho\in C^{2, 1}(\T^d\times[0,\tau])$. Hence, one can converge with the subsequence in \eqref{strongsolutions:firstequation} to obtain
$$
\p_t\rho - \DIV_x A(x, t, \rho, \nabla_x\rho) = 0.
$$
By the uniqueness of the solution, we get $\rho = F(w)$. A standard subsequence argument ensures the continuity of $F$.\\
\\
With this, we have completed the proof of the existence of the local solution to \eqref{strongsolutions:theequation}. Now assume that there exists a maximal interval of existence $[0, t_0)$ with $t_0 < T$. Similarly as before, let $\max_x\rho(x, t) = \rho(x_0, t)$. Then, putting it into the first equation of \eqref{strongsolutions:theequation}
\begin{multline*}
\frac{d}{dt}\max_x\rho(x, t) = \max_{x}\rho(x, t)\Delta_x\Phi_\rho + \gamma\rho^{\gamma - 1}\Delta_x\rho\\
\leq \max_x\rho(x,t)\Delta_x\Phi_\rho = -\rho(x_0, t)^2 + M_{\rho}\,\rho(x_0, t)\leq M_{\rho}\max_x\rho(x,t).
\end{multline*}
Thus
\begin{align}\label{strongsolutions:globalboundsup}
    \max_x{\rho(x, t)}\leq \rho^*\exp(\rho^*\,T).
\end{align}
Moving forward, fix $\min_x\rho(x, t) = \rho(x_0, t)$ and put it into the first equation of \eqref{strongsolutions:theequation}
\begin{multline*}
    \frac{d}{dt}\min_{x}\rho(x, t) = \min_{x}\rho(x, t)\Delta_x\Phi_\rho + \gamma\rho^{\gamma - 1}\Delta_x\rho \geq -\rho(x_0, t)^2 + M_{\rho}\,\rho(x_0, t) \geq -(M + M_{\rho})\rho(x_0, t).
\end{multline*}
Hence
\begin{align}\label{strongsolutions:globalboundsdown}
    \min_x\rho(x, t) \geq \rho_*\exp(-(M+\rho^*)T).
\end{align}
Both of the bounds \eqref{strongsolutions:globalboundsup} and \eqref{strongsolutions:globalboundsdown} are global, meaning that $\lim_{t\to t_0}\rho(x, t)$ is an admissible initial condition and we can repeat the steps above to obtain a solution on the interval $[t_0, t_0 + \delta)$ for some $\delta > 0$, which is a contradiction with maximality of the interval $[0, t_0)$ and ends the proof.
\end{proof}

\bibliography{poisskort}
\bibliographystyle{abbrv}

\end{document}